\documentclass[smallextended,referee,envcountsect]{svjour3}
\usepackage [latin1]{inputenc}
\usepackage{amsmath,amssymb}
\usepackage{marvosym,mathtools}
\usepackage[numbers,sort&compress]{natbib}
\usepackage[colorlinks,linkcolor=blue,urlcolor=blue,citecolor=blue]{hyperref}
\usepackage{graphicx,subfig}
\usepackage{float}
\usepackage{epstopdf}
\smartqed
\usepackage{graphicx}
\journalname{}

\usepackage{fancyhdr}
\usepackage{ntheorem}
\theoremheaderfont{\bfseries\upshape}
\theorembodyfont{\upshape}
\renewtheorem{remark}{\it Remark}[section]
\renewtheorem{example}{Example}[section]
\newtheorem{assumption}{Assumption}[section]

\begin{document}

\title{Primal-dual dynamics  featuring Hessian-driven damping and variable mass  for convex optimization problems}

\author{Xiangkai Sun$^{1}$\and Feng Guo$^{1}$\and Liang He$^{1}$\and Xiaole Guo$^{1}$}

\institute{
          \\ Xiangkai Sun  \at {\small sunxk@ctbu.edu.cn} \\
          \\ Feng Guo \at{\small 993551272@qq.com} \\
          \\Liang He\at{\small liangheee@126.com}  \\
           \\Xiaole Guo (\Letter) \at{\small xlguocqu1@163.com}  \\
                     \\
               $^{1}$Chongqing Key Laboratory of  Statistical Intelligent Computing and Monitoring, College of Mathematics and Statistics,
 Chongqing Technology and Business University,
Chongqing 400067, China. }

\date{Received: date / Accepted: date}

\maketitle

\begin{abstract}
This paper deals with a new Tikhonov regularized primal-dual dynamical system with variable mass and Hessian-driven damping for solving a convex optimization problem with linear equality constraints. The system features several  time-dependent parameters: variable mass, slow viscous damping, extrapolation, and temporal scaling. By employing the Lyapunov analysis approach, we obtain the strong convergence of the trajectory generated by the proposed system to the minimal norm  solution of the optimization problem, as well as convergence rate results for the primal-dual gap, the objective residual, and the feasibility violation. We also show that the convergence rates of the primal-dual gap, the objective residual, and the feasibility violation can be improved by appropriately adjusting these parameters. Further, we conduct numerical experiments to demonstrate the effectiveness of the theoretical results.
\end{abstract}
\keywords{ Convex optimization \and Primal-dual dynamics   \and Strong convergence \and  Hessian-driven damping}
\subclass{ 34D05\and 37N40 \and 46N10\and 90C25}

\section{Introduction}

Let $\mathcal{X}$ and $\mathcal{Y}$ be two real Hilbert spaces. Let $f:\mathcal{X}\rightarrow\mathbb{R}$ be a continuously differentiable convex function, $A:\mathcal{X}\rightarrow\mathcal{Y}$ be a linear operator and $b\in\mathcal{Y}$.   The convex optimization problem with linear equality constraints is defined as
\begin{eqnarray}\label{constrained}
\left\{ \begin{array}{ll}
&\mathop{\mbox{min}}\limits_{x\in\mathcal{X}}~~{f(x)}\\
&\mbox{s.t.}~~Ax=b.
\end{array}
\right.
\end{eqnarray}

In recent years, as one of the powerful frameworks for   solving   problem  (\ref{constrained}), the inertial primal-dual dynamics approach  has attracted great interest of many scholars. In order to solve   problem (\ref{constrained}), Zeng et al. \cite{Zeng}   propose the following   primal-dual inertial dynamical system with asymptotically vanishing damping:
\begin{equation}\label{re4}
\left\{ \begin{split}
&\ddot{x}(t)+\frac{\alpha}{t}\dot{x}(t)+\nabla_{x} {L} (x(t),\lambda(t)+\theta t\dot{\lambda}(t))=0,\\
& \ddot{\lambda}(t)+\frac{\alpha}{t}\dot{\lambda}(t)-\nabla_{\lambda} {L} (x(t)+\theta t\dot{x}(t),\lambda(t))=0, \end{split}
\right.
\end{equation}
where $\alpha\geq3$, $\theta=\frac{1}{2}$,  and $ {L}: \mathcal{X}\times \mathcal{Y}\rightarrow \mathbb{R}$ is the augmented Lagrangian function of problem (\ref{constrained})  defined as $L(x ,\lambda )=f(x)+\langle\lambda,Ax-b\rangle+\frac{1}{2}\|Ax-b\|^2.$ They show that the fast convergence rates for the primal-dual gap, the feasibility violation  and the velocity vector along the trajectory of  system (\ref{re4}) are $\mathcal{O}\left(\frac{1}{t^2}\right)$, $\mathcal{O}\left(\frac{1}{t }\right)$ and $\mathcal{O}\left(\frac{1}{t  }\right)$, respectively.
It is well-known that the time scaling technique is an efficient way to further improve the rates of convergence in   problem (\ref{constrained}). Then,
 Hulett and Nguyen \cite{Hulett2023T} introduce the second-order primal-dual inertial dynamical system with generalized time scaling and asymptotically vanishing damping:
\begin{equation*}\label{hulett23}
\left\{ \begin{split}
&\ddot{x}(t)+\frac{\alpha}{t}\dot{x}(t)+\beta(t)\nabla_{x} {L}^{\rho} (x(t),\lambda(t)+\theta t\dot{\lambda}(t))=0,\\
 &\ddot{\lambda}(t)+\frac{\alpha}{t}\dot{\lambda}(t)-\beta(t)\nabla_{\lambda} {L}^{\rho} (x(t)+\theta t\dot{x}(t),\lambda(t))=0,
 \end{split}
\right.
\end{equation*}
where $\alpha\geq0$, $\theta>0$,  $\beta: [t_0,+\infty)\rightarrow \mathbb{R}$ is time scaling function and   $L^\rho(x(t) ,\lambda(t) )=f(x(t))+\langle\lambda(t),Ax(t)-b\rangle+\frac{\rho}{2}\|Ax(t)-b\|^2$ with the
penalty parameter $\rho>0$. They  derive faster convergence
rates for the primal-dual gap, the feasibility violation, and the objective function value along the generated trajectories,   which can be regarded as an extension of the results obtained in \cite{Zeng,botjde,heaa23}. For problem (\ref{constrained}) with a separable structure, He et al. \cite{He2021C} and Attouch et al. \cite{ajota} also explore  second-order primal-dual dynamical systems with time-dependent
damping coefficients   and obtain  some results similar to those in \cite{Zeng,botjde,heaa23}.

Recently, many scholars have studied the  ``second-order primal''+ ``first-order
dual'' dynamical system for solving problem (\ref{constrained}) since first-order ordinary differential equations (ODEs) are generally   easier
to solve than second-order ODEs  from the perspective of numerical computation. More precisely, by using
the heavy-ball accelerated constant damping $\alpha> 0$,  He et al. \cite{He2022S} introduce  a ``second-order primal''+ ``first-order
dual'' dynamical system with constant viscous damping and time scaling:
\begin{equation}\label{re6}
\left\{ \begin{split}
&\ddot{x}(t)+\alpha\dot{x}(t)+\beta(t)\nabla_{x} {L}^\rho(x(t),\lambda(t))=0,\\
&\dot{\lambda}(t)-\beta(t)\nabla_{\lambda} {L}^\rho(x(t)+\theta \dot{x}(t),\lambda(t))=0. \end{split}
\right.
\end{equation}
It is worth noting that the  system (\ref{re6}) involves the inertial
term only for the primal variable.
They demonstrate that, in the case where
$f$ is a convex function, the system (\ref{re6})  enjoys a convergence rate of
$\mathcal{O}\left(\frac{1}{\beta(t) }\right)$, as $t\rightarrow +\infty$.
For problem (\ref{constrained}) where  the objective function $f$ is $\mu$-strongly convex, He et al. \cite{He2026} introduce  a ``second-order primal''+ ``first-order dual'' dynamical system defined as
\begin{equation*}
\left\{ \begin{split}
&\ddot{x}(t)+2\sqrt{\mu}\dot{x}(t)+\alpha\nabla_{x} {L}^\rho(x(t),\lambda(t))=0,\\
&\dot{\lambda}(t)-\beta(t)\nabla_{\lambda} {L}^\rho\left(x(t)+\frac{1}{\sqrt{\mu}} \dot{x}(t),\lambda(t)\right)=0,\end{split}
\right.
\end{equation*}
where $\mu$ is the strong convex coefficients of $f$ and $\alpha\geq1.$ Then, they establish an $\mathcal{O}\left(\frac{1}{\beta(t) }\right)$ convergence rate and demonstrate that it can achieve an optimal rate of $\mathcal{O}\left( e^{-\sqrt{\mu}t}\right)$. More results
on the convergence rates of  ``second-order primal''+ ``first-order dual'' dynamical systems for   problem (\ref{constrained}) can be found in \cite{He2022,jiang}.

Very recently, to ensure the trajectory converges strongly to the minimal norm solution of problem (\ref{constrained}), instead of weakly to an arbitrary minimizer,
 Zhu et al. \cite{2026zhu} introduce  a Tikhonov regularized ``second-order primal''+ ``first-order dual'' dynamical system with asymptotically vanishing
damping:
   \begin{equation}\label{zhu26}
\left\{ \begin{split}
&\ddot{x}(t)+\frac{\alpha}{t}\dot{x}(t)+ \nabla_{x} {L}^{\rho} (x(t),\lambda(t))+\epsilon(t)x(t)=0,\\
 &\dot{\lambda}(t)-t\nabla_{\lambda} {L}^{\rho} \left(x(t)+ \frac{t}{\alpha-1} \dot{x}(t),\lambda(t)\right)=0.\end{split}
\right.
\end{equation}
Here  $\epsilon: [t_0, +\infty) \rightarrow[0, +\infty)$ is the Tikhonov
regularization parameter which is a $\mathcal{C}^1$ nonincreasing function satisfying
$\lim_{t\rightarrow+\infty} \epsilon(t) = 0$. Under some mild assumptions on $ \epsilon(t)$, they not only derive the $\mathcal{O}\left(\frac{1}{t^2}\right)$
  convergence rates of the primal-dual gap, the objective
residual, and the feasibility violation along the generated trajectory, but also
  prove the strong convergence of the primal trajectory of
  system (\ref{zhu26})  to the minimal
norm solution of    problem (\ref{constrained}).
Subsequently, Li et al. \cite{lihl}  improve the convergence rates of the work in \cite{2026zhu}  by employing  a Tikhonov regularized ``second-order primal''+ ``first-order dual'' dynamical system with general viscous damping and time scaling functions. Further, following the approach used in \cite{2026zhu,zhujcam}, Sun et al. \cite{sunjota} also investigate a Tikhonov regularized ``second-order primal''+ ``first-order dual'' dynamical system for  problem (\ref{constrained}) with a separable structure.

It is worth noting that in \cite{2026zhu,lihl,zhujcam,sunjota},  the strong convergence of the trajectory $x(t)$  towards  the minimal norm solution $x^*$ of
problem (\ref{constrained}) is only ensured under a strong assumption that, for sufficiently large $t$,
$x(t)$ either stays in the open ball $\mathbb{B}(0, \|x^*\|)$, or in its complement.
To address this, Battahi et al. \cite{battahi25} introduced the following Tikhonov regularized ``second-order primal''+ ``first-order dual'' dynamical system for problem (\ref{constrained}):
 \begin{equation}\label{batt25}
\left\{ \begin{split}
&\ddot{x}(t)+ \alpha \dot{x}(t)+ t^p\nabla_{x} \mathcal{L}_t (x(t),\lambda(t)) =0,\\
 &\dot{\lambda}(t)-t^p\nabla_{\lambda} \mathcal{L}_t \left(x(t)+ \frac{1}{\tau} \dot{x}(t),\lambda(t)\right)=0,\end{split}
\right.
\end{equation}
where $\alpha > 0$ is a damping parameter, $t^p$ is  the temporal scaling with $r>0$, $\frac{1}{\tau}$ is the extrapolation parameter, and $ \mathcal{L}_t: \mathcal{X} \times \mathcal{Y} \to \mathbb{R}$ is the augmented
Lagrangian saddle function defined as (\ref{asd}). By appropriately adjusting
these parameters, they
show   that    the fast convergence rates of the primal-dual gap, the feasibility violation, and the objective  residual along the trajectory of  system (\ref{batt25}) are  $\mathcal{O}\left(\frac{1}{t^r}\right)$. They also  prove that the primal trajectory strongly converges to the minimal norm solution without any strong assumptions. The system (\ref{batt25})  with more general
damping, i.e., $t^p=\beta(t)$, was also addressed by  Battahi et al. in \cite{battahi26}. Recently, to solve problem (\ref{constrained}),  Zhu et al. \cite{Zhu2024S} introduce a more general  Tikhonov regularized   dynamical system:
\begin{equation}\label{zhu}
\left\{ \begin{split}
&\ddot{x}(t)+\frac{\alpha}{t^q}\dot{x}(t)+t^s \nabla_{x}\mathcal{L}_t(x(t),\lambda(t))  =0,\\
&\dot{\lambda}(t)-t^{q+s} \nabla_{\lambda}\mathcal{L}_t(x(t)+\theta t^q \dot{x}(t),\lambda(t))  =0,\end{split}
\right.
\end{equation}
where $0\leq q<1$, $0<p<1$, $\alpha>0$, $s>0$ and $\theta>0$. Note that the system (\ref{zhu}) is
more general than  system (\ref{batt25}) even in the specific case of $q = 0$, since the  parameter $s$ does not need to be equal to the  parameter $p$.
By setting the involved parameters, they derive the convergence rate results for the primal-dual gap, the objective residual, and the feasibility violation, as well as the strong convergence of the solution trajectory of  (\ref{zhu}) to the minimum-norm solution of problem (\ref{constrained}).

On the other hand, as we know, inertial dynamical systems  incorporating Hessian-driven
damping exhibit extensive applicability in the fields of optimization and mechanics.  It should be pointed out that for the unconstrained optimization problem $\min_{x\in \mathcal{X}}f(x)$, there has been a large number of works  devoted to  dynamical systems  with Hessian-driven damping from several different perspectives. See, for example,  \cite{Att14siam,Attouch2016F,Bot2021T,att22mp,Attouch2023A,jems23,bagy23,cs24k,Zhong2024F,L2024S,he26coap}. However, in contrast to unconstrained optimization problem $\min_{x\in \mathcal{X}}f(x)$,  there
exist only few papers devoting to the investigation of  inertial dynamical systems with Hessian-driven damping for solving problem (\ref{constrained}).
 More precisely, He et al. \cite{He2023C} proposed the following ``second-order primal''+ ``first-order dual''  dynamical system with general Hessian-driven damping:
\begin{equation}\label{re9}
\left\{ \begin{split}
&\ddot{x}(t)+\frac{\alpha}{t}\dot{x}(t)+\gamma(t)\frac{d}{dt} \nabla_x\mathcal{L}(x(t),\lambda(t)) +\beta(t)\nabla_{x}\mathcal{L}(x(t),\lambda(t))=0,\\
&\dot{\lambda}(t)-\eta(t)\nabla_{\lambda}\mathcal{L}\left(x(t)+\frac{t}{\alpha-1} \dot{x}(t),\lambda(t)\right)=0,\end{split}
\right.
\end{equation}
where $\alpha>1$, $\gamma:\left[t_0,+\infty \right) \to \left[0,+\infty \right) $ is the  Hessian-driven  damping function,  $ \beta,$ $\eta:\left[t_0,+\infty \right) \to \left[0,+\infty \right) $ are the time scaling functions. They show that the fast convergence rates of the Lagrangian residual, the objective residual and the feasibility violation along the trajectory  of   system (\ref{re9}) are $\mathcal{O}\left(\frac{1}{t \eta(t)}\right)$. Csetnek and L\'{a}szl\'{o} \cite{cs24} also consider a Tikhonov regularized second-order primal-dual dynamical system with Hessian-driven  damping and obtain strong convergence of the trajectories to the minimal norm primal-dual solution, as well as fast convergence rates of the   feasibility measure, velocities and
objective function residual. Sun et al. \cite{sunopt} also investigate a second-order primal-dual dynamical system
with Hessian-driven damping and Tikhonov regularization terms in
connection with a convex-concave bilinear saddle point problem.

Motivated by the works reported in \cite{Zhu2024S,He2023C, sunopt},  for solving problem $(\ref{constrained})$, we introduce the  following Tikhonov regularized  primal-dual dynamical system with variable mass, slowly viscous damping,   Hessian-driven damping and time scaling,
\begin{equation}\label{dyn1}
\left\{ \begin{split}
&m(t)\ddot{x}(t)+\frac{\alpha}{t^q}\dot{x}(t)+\gamma\frac{d}{dt} \nabla_{x}\mathcal{L}_t\left(x(t),\lambda(t)\right) +t^s\nabla_{x}\mathcal{L}_t\left(x(t),\lambda(t)\right)=0,\\
 &\dot{\lambda}(t)-(\alpha-1)(t^{q+s}-\gamma q t^{q-1})\nabla_\lambda \mathcal{L}_t\left(x(t)+\theta (t)\dot{x}(t),\lambda(t)\right)=0,\end{split}
\right.
\end{equation}
where $\alpha>1$, $0<q<1$, $\gamma>0$ and $ s>0 $, $m:[t_0,+\infty)\rightarrow(0,+\infty)$ is a differentiable and monotonically non-increasing function, $ \mathcal{L}_t : \mathcal{X} \times \mathcal{Y} \to \mathbb{R}$ is the augmented Lagrangian saddle function (see  (\ref{asd})  for details), $\frac{\alpha}{t^q}  $  is the slowly viscous damping parameter, $\gamma$ is the constant Hessian-driven damping parameter, $ t^s $ is the time scaling parameter, and  $ \theta(t) $ is the extrapolation parameter defined as
\begin{equation}\label{theta}
  \theta(t)\coloneqq\frac{m(t)t^{2q+s}+\gamma t^q-2m(t)\gamma qt^{2q-1}-\gamma\dot{m}(t)t^{2q}}{(\alpha-1)(t^{q+s}-\gamma q t^{q-1})} .
\end{equation}
The contributions of this paper can be more specially stated as follows:
\begin{enumerate}
\item[{\rm (i)}] We propose a Tikhonov regularized ``second-order primal''+ ``first-order dual'' dynamical system (\ref{dyn1}), which incorporates  variable mass,  slowly viscous  damping  and Hessian-driven damping  terms, for solving the   linearly constrained convex optimization problem (\ref{constrained}).  Our dynamical system (\ref{dyn1}) can be regarded as a generalization of the dynamical system  with variable mass from  \cite{csim2024}  for solving  unconstrained optimization problem $\min_{x\in \mathcal{X}}f(x)$.  It is worth noting that the variable mass term plays a   crucial role in improving the convergence rates of system  (\ref{dyn1}). Meanwhile, the   slowly viscous damping term $\frac{\alpha}{t^q}$ has the role of   achieving  the strong convergence of the
solution trajectory to the minimal-norm
solution as observed in the context of \cite{battahi25,battahi26, Zhu2024S}, rather than the strong convergence in the inferior limit sense. Further,  the role of the Hessian-driven damping  term is to suppress the oscillations  of trajectories  associated with system (\ref{dyn1}).
\item[{\rm (ii)}] Under appropriate settings on the underlying parameters, we obtain the strong convergence of the trajectory to the minimal norm  solution of  problem $(\ref{constrained})$, as well as fast convergence rates of the primal-dual gap, the objective residual, and the feasibility violation.   Compared with the results   on strong convergence in the inferior limit sense obtained in  \cite{2026zhu,lihl,zhujcam,sunjota,Bot2021T,cs24k}, the  strong convergence  result obtained in this paper is   in the limit sense  and does not require  the strong assumptions  that $x(t)$ either stays in the open ball $\mathbb{B}(0, \|x^*\|)$, or in its complement.
\item[{\rm (iii)}] We perform  two numerical examples to  demonstrate  the efficiency of   system  (\ref{dyn1})   in terms of  the objective residual and the feasibility violation.  In the first numerical example, we demonstrate that the system (\ref{dyn1})  incorporating a variable mass term can preserve  and even improve  the results regarding the fast convergence rate. In the second numerical example, we show that  the system (\ref{dyn1}) with a Hessian-driven damping term
    can eliminate  possible oscillations in the dynamical behaviour of the trajectories.
\end{enumerate}

The rest of this paper is organized as follows. In Section 2,  we recall some basic notations and present some preliminary results. In Section 3, we investigate the convergence properties of the primal-dual gap, the objective function value and the feasibility violation, and the strong convergence of the primal-dual trajectory generated by system (\ref{dyn1}).  In Section 4, we give some numerical experiments to illustrate our theoretical findings.

\section{Preliminaries}

Let $h:\mathcal{X}\to \mathbb{R}$ be a continuously differentiable convex  function. We say that the gradient of $h$ is Lipschitz continuous on $\mathcal{X}$ iff there exists $0<l<+\infty$ such that
$$
\|\nabla h(x_1)-\nabla h (x_2)\|\leq l\|x_1-x_2\|,~~~~\forall x_1,x_2 \in \mathcal{X}.
$$
We say that $ h $ is $ \epsilon $-strongly  convex function with a strong convexity parameter $ \epsilon\geq 0 $ iff $h-\frac{\epsilon}{2}\|\cdot\|^2  $ is a convex function. Clearly, the following gradient inequality holds:
\begin{equation}\label{strong}
\left\langle \nabla h(x_1),x_2-x_1 \right\rangle \leq h(x_2)-h(x_1)-\frac{\epsilon}{2}\|x_1-x_2\|^2, ~~~ \forall x_1, x_2\in \mathcal{X}.
\end{equation}

Now, consider the convex optimization problem with linear equality constraints (\ref{constrained}).The Lagrangian function $\mathcal{L}:\mathcal{X}\times\mathcal{Y}\rightarrow\mathbb{R}$ of problem (\ref{constrained}) is defined by
$$\mathcal{L}(x,\lambda)=f(x)+\langle\lambda,Ax-b\rangle$$
and that $ (x^*,y^*) \in \mathcal{X}\times \mathcal{Y}$ is said to be a saddle point of the  Lagrangian function $ \mathcal{L} $ iff
 \begin{equation}\label{Saddlepoint}
 \mathcal{L}(x^*,\lambda)\leq \mathcal{L}(x^*,\lambda^*) \leq \mathcal{L}(x,\lambda^*),~~\forall (x,\lambda)\in\mathcal{X}\times\mathcal{Y}.
 \end{equation}
The saddle point set of $\mathcal{L}$ is denote by $\Omega$. It is well-known that $(x^*,\lambda^*)\in\Omega$ if and only if
\begin{equation}\left\{\label{KKT}
\begin{split}
&\nabla f(x^*)+A^\top\lambda^*=0,\\
&Ax^*-b=0.
\end{split}\right.
\end{equation}
A pair $(x^*,\lambda^*)\in\Omega$ is also called a primal-dual solution of problem (\ref{constrained}).

For $ c>0 $ and $ p>0 $, associated with the Lagrangian function $ \mathcal{L} $, we introduce   the  augmented  Lagrangian function $ \mathcal{L}_t: \mathcal{X} \times \mathcal{Y} \to \mathbb{R}$ defined as
\begin{equation}\label{asd}
\mathcal{L}_t(x,\lambda):= \mathcal{L}(x,\lambda)  + \frac{c}{2t^p}\left(\|x\|^2-\|\lambda\|^2\right)=f(x)+\left \langle Ax-b,\lambda \right \rangle+\frac{c}{2t^p}\left(\|x\|^2-\|\lambda\|^2\right).
\end{equation}
Obviously, $ \mathcal{L}_t(\cdot,\lambda)$ is $ \frac{c}{t^p} $-strongly convex for any $ \lambda\in \mathcal{Y} $, and $ \mathcal{L}_t(x,\cdot)$ is $ \frac{c}{t^p} $-strongly concave for any $ x\in \mathcal{X} $. This means that $ \mathcal{L}_t$ admits a unique  saddle point $ (x_t,\lambda_t) \in \mathcal{X} \times \mathcal{Y}$, i.e.,
 \begin{equation}\label{Saddlepoint-t}
 \mathcal{L}_t(x_t,\lambda)\leq \mathcal{L}_t(x_t,\lambda_t) \leq \mathcal{L}_t(x,\lambda_t),~~\forall (x,\lambda)\in\mathcal{X}\times\mathcal{Y}.
 \end{equation}
 Naturally, the system of primal-dual optimality conditions reads
 \begin{equation}\label{KKT1}
\left\{ \begin{split}
& 0=\nabla_x\mathcal{L}_t(x_t,\lambda_t) =\nabla  f(x_t)+A^\top\lambda_t+\frac{c}{t^p}x_t,\\
& 0=\nabla_\lambda\mathcal{L}_t(x_t,\lambda_t)=Ax_t-b-\frac{c}{t^p}\lambda_t.
\end{split}\right.
 \end{equation}
Here, $\nabla _x\mathcal{L}_t$ and $\nabla _\lambda\mathcal{L}_t$ denote the gradients of $\mathcal{L}_t$ with respect to the first argument and  the second argument, respectively.

The following important property will be used in the sequel.
\begin{lemma}\textup{\cite[Lemma 6]{He2022}}   \label{le1}
Assume that $g:[t_0,+\infty)\rightarrow\mathcal{X}$ is a continuous differentiable function, $\eta:[t_0,+\infty)\rightarrow[0,+\infty)$ is a continuoud differentiable function, $t_0>0$, and $C\geq0$. If
$$
\left\|g(t)+\int^t_{t_0}\eta(s)g(s)ds\right\|\leq C, ~~\forall t\geq t_0,
$$
then
$
\sup_{t\geq t_0}\|g(t)\|<+\infty.
$
\end{lemma}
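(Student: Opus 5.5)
Set $G(t):=\int_{t_0}^t \eta(s)g(s)\,ds$. The hypothesis says $\|g(t)+G(t)\|\le C$ for all $t\ge t_0$. Since $\dot G(t)=\eta(t)g(t)$, we can write $g=h-G$ with $\|h(t)\|\le C$, and this gives a linear differential equation for $G$:
\[
\dot G(t)+\eta(t)G(t)=\eta(t)h(t),\qquad G(t_0)=0 .
\]
The plan is to solve this explicitly with an integrating factor, bound $G$ uniformly, and then recover the bound on $g$ from $g=h-G$.

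Let $\Gamma(t):=\int_{t_0}^t\eta(s)\,ds\ge 0$. Multiplying the equation by $e^{\Gamma(t)}$ gives $\frac{d}{dt}\bigl(e^{\Gamma(t)}G(t)\bigr)=e^{\Gamma(t)}\eta(t)h(t)$. Integrating from $t_0$ to $t$ and using $G(t_0)=0$,
\[
G(t)=e^{-\Gamma(t)}\int_{t_0}^t e^{\Gamma(s)}\eta(s)h(s)\,ds .
\]
Because $\eta\ge0$, we can take norms inside the integral:
\[
\|G(t)\|\le C e^{-\Gamma(t)}\int_{t_0}^t \frac{d}{ds}e^{\Gamma(s)}\,ds
= C\bigl(1-e^{-\Gamma(t)}\bigr)\le C .
\]
Therefore $\|g(t)\|\le\|h(t)\|+\|G(t)\|\le 2C$ for every $t\ge t_0$, which proves $\sup_{t\ge t_0}\|g(t)\|<+\infty$.

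The only points needing care are routine. The vector-valued integrating-factor computation is valid because $g$ and $\eta$ are continuous, so $G$ is $C^1$ in the Hilbert space $\mathcal{X}$ and the product rule applies. The norm estimate uses the sign condition $\eta\ge0$ in an essential way: it makes $e^{\Gamma}\eta$ a nonnegative weight and keeps $e^{-\Gamma}\le1$. I expect no real obstacle beyond verifying these two points.
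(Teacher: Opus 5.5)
Your proof is correct and complete. Setting $h=g+G$ turns the hypothesis into the linear equation $\dot G+\eta G=\eta h$ with $G(t_0)=0$. The integrating factor $e^{\Gamma}$, together with $\eta\ge 0$, then gives $\|G(t)\|\le C\bigl(1-e^{-\Gamma(t)}\bigr)\le C$, and hence $\|g(t)\|\le 2C$ for all $t\ge t_0$.

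The paper contains no proof of this lemma to compare against; it is simply imported from \cite[Lemma 6]{He2022}.

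Two small observations on your argument:
\begin{itemize}
\item It uses only continuity of $g$ and $\eta$, so the $C^1$ hypotheses in the statement are superfluous.
\item The exact solution formula is what makes it work. A direct Gronwall estimate applied to $\|g(t)\|\le C+\int_{t_0}^t\eta(s)\|g(s)\|\,ds$ would give only $\|g(t)\|\le Ce^{\Gamma(t)}$, which is unbounded whenever $\eta\notin L^1$.
\end{itemize}
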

\begin{lemma}\textup{\cite[Lemma 2.3]{battahi25}}   \label{xtyt}
Let $ (\bar{x}^*,\bar{\lambda}^*) =\mathrm{Proj}_{\Omega}0 $ and $(x_t,\lambda_t) $ be the saddle point of $ \mathcal{L}_t $. Suppose $0<p<1$. Then,
\begin{itemize}
\item[{\rm (i)}]   $ \lim_{t\to +\infty} \|(x_t,\lambda_t)-(\bar{x}^*,\bar{\lambda}^*)\| =0$ and $ \|(x_t,\lambda_t)\|\leq\|(\bar{x}^*,\bar{\lambda}^*)\| $ for all $ t $ $\geq$ $ t_0 $.
\item[{\rm (ii)}]   $ \|(\dot{x}_t,\dot{\lambda}_t)\|\leq \frac{p}{t}\|(x_t,\lambda_t) \|\leq \frac{p}{t}\|(\bar{x}^*,\bar{\lambda}^*) \| $ for all $ t $ $\geq$ $ t_0 $.
\end{itemize}
\end{lemma}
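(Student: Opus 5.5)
Both parts follow from the optimality system (\ref{KKT1}) together with the monotonicity of $F(x,\lambda):=(\nabla f(x)+A^\top\lambda,\;b-Ax)$, which is the (skew plus gradient) saddle operator of $\mathcal{L}$. Write $z_t=(x_t,\lambda_t)$ and $\bar z^*=(\bar x^*,\bar\lambda^*)$. Then (\ref{KKT1}) reads $F(z_t)+\varepsilon(t)z_t=0$ with $\varepsilon(t)=c/t^p$; in other words $z_t$ is the Tikhonov-regularized zero of $F$. Likewise, (\ref{KKT}) says that $\Omega=F^{-1}(0)$. The set $\Omega$ is closed and convex, and it is nonempty because the lemma presupposes $\mathrm{Proj}_\Omega 0$ exists.

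\emph{Part (i).} For any $z^*\in\Omega$, monotonicity of $F$ gives $\langle F(z_t)-F(z^*),z_t-z^*\rangle\ge0$. Substituting $F(z_t)=-\varepsilon z_t$ and $F(z^*)=0$ yields $\langle z_t,z^*-z_t\rangle\ge0$. Hence $\|z_t\|^2\le\langle z_t,z^*\rangle\le\|z_t\|\|z^*\|$, so $\|z_t\|\le\|z^*\|$. Taking $z^*=\bar z^*$ gives the norm bound in (i). For the limit, the family $(z_t)$ is bounded, so take any weak cluster point $\hat z$ along some $t_n\to\infty$. Since $F(z_{t_n})=-\varepsilon(t_n)z_{t_n}\to0$ strongly and $F$ is maximal monotone (a continuous monotone operator on the whole space), demiclosedness gives $F(\hat z)=0$, i.e.\ $\hat z\in\Omega$. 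Weak lower semicontinuity of the norm gives $\|\hat z\|\le\liminf\|z_{t_n}\|\le\|\bar z^*\|$. Since $\bar z^*$ is the unique minimal-norm element of $\Omega$, this forces $\hat z=\bar z^*$. Then $\limsup\|z_{t_n}\|\le\|\bar z^*\|=\|\hat z\|$, which together with weak convergence gives strong convergence. As every sequence has a subsequence converging to $\bar z^*$, the whole family converges.

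\emph{Part (ii).} First, $t\mapsto z_t$ is locally Lipschitz. Indeed, for $t,\tau\ge t_0$, monotonicity applied to $F(z_t)=-\varepsilon(t)z_t$ and $F(z_\tau)=-\varepsilon(\tau)z_\tau$ gives
\[
\langle \varepsilon(t)z_t-\varepsilon(\tau)z_\tau,\,z_t-z_\tau\rangle\le0 .
\]
Rewrite the left side as $\varepsilon(t)\|z_t-z_\tau\|^2+(\varepsilon(t)-\varepsilon(\tau))\langle z_\tau,z_t-z_\tau\rangle$. This gives
\[
\varepsilon(t)\|z_t-z_\tau\|\le|\varepsilon(t)-\varepsilon(\tau)|\,\|z_\tau\|.
\]
Hence $z_t$ is Lipschitz on compact intervals, and hence differentiable almost everywhere; this is the sense in which $\dot z_t$ is used. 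The statement already presumes $\dot z_t$ exists, so wherever it does, divide by $|t-\tau|$ and let $\tau\to t$ to get
\[
\|\dot z_t\|\le\frac{|\dot\varepsilon(t)|}{\varepsilon(t)}\|z_t\| .
\]
With $\varepsilon(t)=ct^{-p}$ we have $|\dot\varepsilon|/\varepsilon=p/t$, and combining with (i) gives $\|\dot z_t\|\le\frac pt\|z_t\|\le\frac pt\|\bar z^*\|$.

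The main obstacle is the regularity of $t\mapsto z_t$: a merely $C^1$ $f$ does not support an implicit-function-theorem argument. I would handle it as above, by deriving the difference-quotient bound directly from monotonicity so that no second-order information on $f$ is needed. The hypothesis $0<p<1$ plays no role beyond $p>0$ in this argument.
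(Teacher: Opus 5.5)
The paper gives no proof of this lemma; it imports it from \cite[Lemma 2.3]{battahi25}, so there is no internal argument to compare against. Your proof is correct and self-contained.

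For (i), you rewrite (\ref{KKT1}) as $F(z_t)+\frac{c}{t^p}z_t=0$, where $F(x,\lambda)=(\nabla f(x)+A^\top\lambda,\,b-Ax)$ is continuous and monotone, hence maximal monotone. This reduces (i) to the classical Browder-type Tikhonov argument, and each step checks:
\begin{itemize}
\item the norm bound, from pairing with an arbitrary $z^*\in\Omega$;
\item weak cluster points lying in $F^{-1}(0)=\Omega$, by weak--strong closedness of the graph;
\item identification of the cluster point with $\bar z^*$ through norm minimality;
\item the upgrade to strong convergence via $\limsup\|z_{t_n}\|\le\|\hat z\|$.
\end{itemize}

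For (ii), a common shortcut in this literature is to differentiate (\ref{KKT1}) in $t$ and pair with $(\dot x_t,\dot\lambda_t)$, using $\langle \tfrac{d}{dt}\nabla f(x_t),\dot x_t\rangle\ge 0$ and the cancellation of the $A$-terms. That route is shorter, but it presupposes that $t\mapsto z_t$ and $t\mapsto\nabla f(x_t)$ are differentiable. This does not follow from $f\in\mathcal{C}^1$: already in one dimension, an $f'$ with a kink makes $t\mapsto x_t$ non-differentiable at one time.

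Your difference-quotient estimate $\frac{c}{t^p}\|z_t-z_\tau\|\le|ct^{-p}-c\tau^{-p}|\,\|z_\tau\|$ avoids this. It gives local Lipschitz continuity of $t\mapsto z_t$, hence differentiability almost everywhere. For that step, invoke explicitly that Hilbert spaces, being reflexive, have the Radon--Nikod\'ym property; Lipschitz curves in general Banach spaces need not be differentiable anywhere. You then get $\|\dot z_t\|\le\frac pt\|z_t\|$ wherever $\dot z_t$ exists. Under the paper's standing hypothesis that $f$ is merely $\mathcal{C}^1$, this is the only meaningful reading of ``for all $t\ge t_0$''.

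You are also right that only $p>0$ is used.
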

\begin{remark}
Let $ z_t=(x_t,\lambda_t) $ and $ \bar{z}^*=(\bar{x}^*,\bar{\lambda}^*) $. By Lemma \ref{xtyt}, it is easy to see that
\begin{equation}\label{z1}
\max\left\{\|x_t\|^2,\|\lambda_t\|^2\right\}\leq \|\bar{z}^*\|^2
\end{equation}
and
\begin{equation}\label{z2}
\max\left\{\|\dot{x}_t\|^2,\|\dot{\lambda}_t\|^2\right\}\leq \|\dot{z}_t\|^2\leq \frac{p^2}{t^2} \|\bar{z}^*\|^2.
\end{equation}
\end{remark}
\begin{lemma}\textup{\cite[Lemma 2.4]{battahi25}}   \label{le2}
For any $t\geq t_0$, Suppose $c>0$ and $0<p<1$, it holds,
\begin{equation*}
\frac{d}{dt}\mathcal{L}_t(x_t,\lambda_t)=\frac{cp}{2t^{p+1}}\left(\|\lambda_t\|^2-\|x_t\|^2\right).
\end{equation*}
\end{lemma}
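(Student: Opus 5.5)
The plan is to differentiate $t\mapsto \mathcal{L}_t(x_t,\lambda_t)$ along the curve $t\mapsto (x_t,\lambda_t)$ of saddle points, using the chain rule together with the optimality system (\ref{KKT1}).

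\textbf{Step 1 (chain rule).} Write $\varphi(t)=\mathcal{L}_t(x_t,\lambda_t)$. Since $\mathcal{L}_t$ depends on $t$ both explicitly (through $\frac{c}{2t^p}$) and through its arguments,
\begin{equation*}
\dot\varphi(t)=\frac{\partial}{\partial t}\mathcal{L}_t(x,\lambda)\Big|_{(x,\lambda)=(x_t,\lambda_t)}+\left\langle \nabla_x\mathcal{L}_t(x_t,\lambda_t),\dot x_t\right\rangle+\left\langle \nabla_\lambda\mathcal{L}_t(x_t,\lambda_t),\dot\lambda_t\right\rangle .
\end{equation*}

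\textbf{Step 2 (use the saddle point conditions).} By (\ref{KKT1}), both partial gradients vanish at $(x_t,\lambda_t)$, so the last two terms drop out.

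\textbf{Step 3 (explicit derivative).} Only the regularization term depends explicitly on $t$, and
\begin{equation*}
\frac{\partial}{\partial t}\left(\frac{c}{2t^p}\right)=-\frac{cp}{2t^{p+1}}.
\end{equation*}
Hence
\begin{equation*}
\dot\varphi(t)=-\frac{cp}{2t^{p+1}}\left(\|x_t\|^2-\|\lambda_t\|^2\right)=\frac{cp}{2t^{p+1}}\left(\|\lambda_t\|^2-\|x_t\|^2\right),
\end{equation*}
which is the claimed identity.

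\textbf{Main obstacle.} The delicate point is justifying that $t\mapsto(x_t,\lambda_t)$ is differentiable, which Step 1 requires. Lemma \ref{xtyt}(ii) already refers to $(\dot x_t,\dot\lambda_t)$, so I would take this differentiability as given from \cite{battahi25}.

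If one prefers not to rely on it, the saddle point inequalities (\ref{Saddlepoint-t}) give a derivative-free route. For $t$ and $s$, evaluating the two saddle conditions at each other's points yields the sandwich
\begin{equation*}
\mathcal{L}_t(x_t,\lambda_s)-\mathcal{L}_s(x_t,\lambda_s)\;\le\;\varphi(t)-\varphi(s)\;\le\;\mathcal{L}_t(x_s,\lambda_t)-\mathcal{L}_s(x_s,\lambda_t).
\end{equation*}
Each bound equals $\left(\frac{c}{2t^p}-\frac{c}{2s^p}\right)$ times a difference of squared norms. Dividing by $t-s$ and letting $s\to t$, the continuity of $(x_t,\lambda_t)$ in $t$ (which follows from strong convexity–concavity of $\mathcal{L}_t$) makes both bounds converge to the same limit, and this limit is the formula above.
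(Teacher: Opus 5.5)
Your proof is correct. The paper gives no proof of this lemma; it imports it from \cite[Lemma 2.4]{battahi25}. Your Steps 1--3 are the standard derivation. They are the same manipulation the paper performs for $\frac{d}{dt}\mathcal{L}_t(x(t),\lambda_t)$ at the start of the proof of Proposition \ref{lemma4.21}: there (\ref{KKT1}) absorbs the terms in $\dot x_t$ and $\dot\lambda_t$, and only the explicit $t$-derivative of $\frac{c}{2t^p}$ survives.

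Your second, derivative-free route is genuinely different and in one respect stronger, so it is worth keeping.

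\begin{itemize}
\item The two inequalities are correctly read off from (\ref{Saddlepoint-t}) at $t$ and at $s$. For $s>t$, dividing by $t-s<0$ reverses them, but the difference quotient is still squeezed between the two bounds.
\item The argument needs only continuity of $t\mapsto z_t=(x_t,\lambda_t)$, and you can make this explicit. The point $z_t$ is the zero of the monotone operator $(x,\lambda)\mapsto(\nabla f(x)+A^\top\lambda,\,b-Ax)$ shifted by $\frac{c}{t^p}\mathrm{Id}$. Pairing the two optimality systems with $z_t-z_s$ gives $\|z_t-z_s\|\le\frac{|t^{-p}-s^{-p}|}{t^{-p}}\|z_s\|\le\frac{|t^{-p}-s^{-p}|}{t^{-p}}\|\bar z^*\|$, using Lemma \ref{xtyt}(i).
\end{itemize}

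This matters because $f$ is only assumed $C^1$. In that case $t\mapsto(x_t,\lambda_t)$ is locally Lipschitz, hence differentiable almost everywhere. It can still have corners, for instance when the path crosses a point where $\nabla f$ has a kink. So the chain-rule argument as written gives the identity only at those $t$ where $(\dot x_t,\dot\lambda_t)$ exists. The envelope-type sandwich argument instead shows that $t\mapsto\mathcal{L}_t(x_t,\lambda_t)$ is differentiable with the stated derivative at every $t\ge t_0$, which is exactly what the lemma asserts.
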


\section{Strong convergence of trajectory to the minimal norm solution}
In this section, we establish a simultaneous result on the strong convergence of the trajectory generated by the system (\ref{dyn1}), and the convergence rate of the primal-dual gap, the objective residual, and the feasibility violation.

Let $ (x ,\lambda ):\left[t_0, +\infty\right)\to\mathcal{X} \times \mathcal{Y}  $ be a global solution of   system \textup{(\ref{dyn1})} and let $ (x_t,\lambda_t) $ be the saddle point of $ \mathcal{L}_t $.  We introduce the   energy function $ \mathcal{E}(t):\left[t_0,+\infty \right) \to \mathbb{R} $  defined as
\begin{equation}\label{defen}
\begin{split}
\mathcal{E}(t)=&a(t)t^q\left(\mathcal{L}_t(x(t),\lambda_t)-\mathcal{L}_t(x_t,\lambda_t)\right)\\
&+\frac{1}{2}\lVert(\alpha-1)(x(t)-x_t)+t^q \big(m(t)\dot{x}(t)+\gamma \nabla_{x}\mathcal{L}_t\left(x(t),\lambda(t)\right)\big)\rVert^2\\
&+\frac{1}{2}b(t)\|x(t)-x_t\|^2+\frac{1}{2}\|\lambda(t)-\lambda_t\|^2,
\end{split}
\end{equation}
where
$
a(t)=m(t)t^{q+s} -2\gamma q m(t) t^{q-1}-\gamma\dot{m}(t)t^{q}+\gamma
$
and
$
b(t)=-(\alpha-1)\big(q m(t)t^{q-1}+\dot{m}(t)t^q-1\big).
$

In the sequel, we will employ the following mild assumptions.  Note that the similar assumptions have been used in \cite{L2024S,csim2024}.

\begin{assumption}\label{assume1}
There exists   $k_1>0$ such that $ \frac{\gamma}{t^{q+s}}\leq m(t)\leq\frac{k_1}{t^{q}}$,  for $t $ big enough.
\end{assumption}
\begin{assumption}\label{assume2}
 There exists $k_2>0$ such that $t\left| \dot{m}(t) \right|\leq {k_2} m(t)$  and  $t^2 \left|\ddot{m}(t)\right|\leq {k_2}m(t)$, for $t$ big enough.
\end{assumption}

Obviously, by $ 0<q<1 $,  $ s>0 $ and Assumptions \textup{\ref{assume1}}  and  \textup{\ref{assume2}}, there exists $ t_1\geq t_0 $ such that   $a(t)\geq0$ and $ b(t)\geq0 $,    $ \forall t\geq t_1 $. Thus, $ \mathcal{E}(t)\geq 0 $,  $ \forall t\geq t_1 $.

The following proposition   gives a estimate  for the  energy function (\ref{defen}), which will play a crucial role  in establishing convergence results.
\begin{proposition}\label{lemma4.21} Suppose that $0<p<1$ and Assumptions \textup{\ref{assume1}}  and  \textup{\ref{assume2}} are satisfied.
 Let $ (x ,\lambda ):\left[t_0, +\infty\right)\to\mathcal{X} \times \mathcal{Y}  $ be a global solution of  system \textup{(\ref{dyn1})} and let $ (x_t,\lambda_t) $ be the saddle point of $ \mathcal{L}_t $.
 Then,  there exists  a nonegative constant $\mathcal{C}_0$ such that for   $ t $ large enough,
\begin{eqnarray*}
\dot{ \mathcal{E} }(t)+\frac{M}{t^r} {\mathcal{E}}(t)
\leq \mathcal{C}_0 \left(m(t)t^{3q+s+p-2}+m(t)t^{q+s-p-1}\right),
\end{eqnarray*}
where $r=\max\{q,p-q-s\}$ and $0<M<\min\left\{\frac{3c}{4(2\alpha-1)},c(\alpha-1)-\frac{1}{a_1}\right\}$.
\end{proposition}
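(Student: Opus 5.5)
We differentiate $\mathcal{E}$ term by term along the trajectory of (\ref{dyn1}) and substitute the dynamics. The aim is an inequality of the form $\dot{\mathcal{E}}(t)\le -\frac{M}{t^r}\mathcal{E}(t)+\text{(perturbation)}$.

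\textbf{Step 1: the mixed term.} Set
\[
u(t)=(\alpha-1)(x(t)-x_t)+t^q\bigl(m(t)\dot x(t)+\gamma\nabla_x\mathcal{L}_t(x(t),\lambda(t))\bigr).
\]
The first equation of (\ref{dyn1}) can be rewritten as
\[
\tfrac{d}{dt}\bigl(m\dot x+\gamma\nabla_x\mathcal{L}_t\bigr)=-\tfrac{\alpha}{t^q}\dot x-t^s\nabla_x\mathcal{L}_t-\dot m\dot x+\gamma\partial_t\nabla_x\mathcal{L}_t ,
\]
where the explicit time derivative is $\partial_t\nabla_x\mathcal{L}_t=-\frac{cp}{t^{p+1}}x$. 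Substituting this, $\dot u$ becomes a combination of three kinds of terms:
\begin{itemize}
\item $\dot x$-terms with coefficients built from $\alpha-1$, $qt^{q-1}m$ and $t^q\dot m$;
\item the term $-t^{q+s}\nabla_x\mathcal{L}_t(x,\lambda)$ together with $\gamma q t^{q-1}\nabla_x\mathcal{L}_t$;
\item remainder terms in $\dot x_t$ and $x$.
\end{itemize}
In $\langle u,\dot u\rangle$ the cross terms $\langle x-x_t,\dot x\rangle$ are cancelled by $\frac12\frac{d}{dt}(b\|x-x_t\|^2)$. This is exactly what dictates the choice of $b(t)$.

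\textbf{Step 2: the $\lambda$-term and the gap term.} For the $\lambda$-part, the second equation gives $\langle\lambda-\lambda_t,\dot\lambda\rangle=(\alpha-1)(t^{q+s}-\gamma qt^{q-1})\langle\lambda-\lambda_t,A(x+\theta\dot x)-b-\frac{c}{t^p}\lambda\rangle$. The definition (\ref{theta}) of $\theta$ is designed so that the $\langle\lambda-\lambda_t,A\dot x\rangle$ and $\nabla_x\mathcal{L}_t$-contributions cancel against those coming from $\langle u,\dot u\rangle$ and from
\[
\tfrac{d}{dt}\bigl(a(t)t^q(\mathcal{L}_t(x,\lambda_t)-\mathcal{L}_t(x_t,\lambda_t))\bigr).
\]
We compute the latter using the chain rule, Lemma \ref{le2}, and $\partial_t\mathcal{L}_t=-\frac{cp}{2t^{p+1}}(\|x\|^2-\|\lambda\|^2)$. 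After these cancellations we apply the strong convexity inequality (\ref{strong}) with $\epsilon=c/t^p$ to $\langle\nabla_x\mathcal{L}_t(x,\lambda),x-x_t\rangle$, and its concave analogue in $\lambda$. This produces negative terms of the form
\[
-(\alpha-1)t^{q+s}\bigl(\mathcal{L}_t(x,\lambda_t)-\mathcal{L}_t(x_t,\lambda_t)\bigr)-\tfrac{c}{2t^{p}}t^{q+s}\bigl(\|x-x_t\|^2+\|\lambda-\lambda_t\|^2\bigr),
\]
plus a negative multiple $-t^{q}m\,(\cdot)\|\dot x\|^2$ and a negative $\gamma$-weighted $\|\nabla_x\mathcal{L}_t\|^2$ term. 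Assumptions \ref{assume1}--\ref{assume2} (in particular $m\ge\gamma t^{-q-s}$ and $|\dot m|\lesssim m/t$, $|\ddot m|\lesssim m/t^2$) are used to show that the derivatives of $a$ and $b$ and the leftover coefficients have definite sign, or are dominated, for $t$ large.

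\textbf{Step 3: recovering $\frac{M}{t^r}\mathcal{E}$.} We add $\frac{M}{t^r}\mathcal{E}$ and bound each of its four pieces by the negative terms from Step 2. Expanding $\frac12\|u\|^2\le(\alpha-1)^2\|x-x_t\|^2+t^{2q}\|m\dot x+\gamma\nabla_x\mathcal{L}_t\|^2$ with Young's inequality is where the constants $\frac{3c}{4(2\alpha-1)}$ and $c(\alpha-1)-\frac1{a_1}$ arise; here $a_1$ is a Young parameter. The exponent $r=\max\{q,p-q-s\}$ is exactly what lets both $t^{-q}$ (for the velocity and gap parts) and $t^{q+s-p}$ (for the distance parts) control $t^{-r}$.

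\textbf{Step 4: the perturbation terms.} All terms involving $\dot x_t,\dot\lambda_t$ and $\frac{cp}{t^{p+1}}x$ are handled with Cauchy--Schwarz/Young, (\ref{z1}) and (\ref{z2}). Part of each such product is absorbed into the remaining negative quadratic terms, and the rest is bounded by $\|\bar z^*\|^2$ times powers of $t$. Collecting exponents, with $m$ entering through $a(t)\sim mt^{q+s}$, should yield $\mathcal{C}_0\bigl(m t^{3q+s+p-2}+m t^{q+s-p-1}\bigr)$.

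The main obstacle is the bookkeeping in Steps 1--2. One must check that the cancellations forced by $\theta$, $a$ and $b$ are exact, including the $\dot m$ and $\ddot m$ terms from $\dot a$. One must also check that after Young's inequality enough negativity survives to dominate $\frac{M}{t^r}\mathcal{E}$ under the stated bound on $M$. I would first carry out the computation with constant $m$ and $\gamma=0$, then add the $\gamma$-terms.
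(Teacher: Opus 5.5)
Your outline follows the paper's proof. It uses the same energy (\ref{defen}) and the same three cancellations: $\langle x-x_t,\dot x\rangle$ via $b(t)$, $\langle \nabla_x\mathcal{L}_t(x,\lambda_t),\dot x\rangle$ via $a(t)$, and $\langle A^\top(\lambda-\lambda_t),\dot x\rangle$ via $(\alpha-1)(t^{q+s}-\gamma q t^{q-1})\theta(t)=a(t)t^q$. It then applies strong convexity/concavity, Young's inequality on the $\dot x_t,\dot\lambda_t$ terms with (\ref{z1})--(\ref{z2}), and a coefficient-by-coefficient sign check. However, two steps do not work as you wrote them.

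In Step 1, the Hessian term $\gamma\frac{d}{dt}\nabla_x\mathcal{L}_t(x(t),\lambda(t))$ in (\ref{dyn1}) is the total time derivative. Exactly the same quantity appears when you differentiate $u$, so it is eliminated completely and there is no $\gamma\partial_t\nabla_x\mathcal{L}_t$ remainder. Also, your $\dot m$ term has the wrong sign: $\frac{d}{dt}(m\dot x+\gamma\nabla_x\mathcal{L}_t)=\dot m\dot x-\frac{\alpha}{t^q}\dot x-t^s\nabla_x\mathcal{L}_t$, so
\[
\dot u=(qmt^{q-1}+\dot m t^q-1)\dot x-(\alpha-1)\dot x_t+(\gamma q t^{q-1}-t^{q+s})\nabla_x\mathcal{L}_t(x,\lambda).
\]
With your sign, the $\dot m$-contributions no longer match $b(t)$, $a(t)$ and $\theta(t)$, and the exact cancellations you rely on break. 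Similarly, strong convexity must be applied to $\nabla_x\mathcal{L}_t(x,\lambda_t)$ after splitting off $A^\top(\lambda-\lambda_t)$. Applied directly it yields $\mathcal{L}_t(x,\lambda)-\mathcal{L}_t(x_t,\lambda)$, which is not the gap in $\mathcal{E}$. After the split, the leftover $\langle \lambda-\lambda_t,A(x-x_t)\rangle$ terms cancel against the dual equation.

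In Step 4, your claim that the bookkeeping ``should yield'' $m t^{q+s-p-1}$ is not supported. The explicit $t$-dependence of $\mathcal{L}_t$, together with Lemma \ref{le2}, leaves the term $\frac{cp}{2}a(t)t^{q-p-1}(\|x_t\|^2-\|x(t)\|^2)$. Bounding it by $\frac{cp}{2}a(t)t^{q-p-1}\|\bar{z}^*\|^2$, as the paper does, gives order $m(t)t^{2q+s-p-1}$. That is where the paper's proof actually ends, and $2q+s-p-1$ is what Theorem \ref{slow} uses, so the exponent in the statement appears to be a misprint.

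If you want the bound as literally stated, keep the $-\|x(t)\|^2$. Write $\|x_t\|^2-\|x\|^2=-\|x-x_t\|^2-2\langle x_t,x-x_t\rangle$. Then absorb the cross term by Young into part of the strictly negative $t^{q+s-p}\|x-x_t\|^2$ term. There is slack there, since $M<\frac{3c}{4(2\alpha-1)}$ strictly. For large $t$ you have $a(t)\le 3m(t)t^{q+s}$ and $m(t)\le k_1t^{-q}$. Hence the remainder is $O(a^2t^{q-p-s-2})=O(m t^{2q+s-p-2})$, which is already dominated by $mt^{3q+s+p-2}$.
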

\begin{proof}
Now, we  analyze the time derivative of $ \mathcal{E}(t) $.
Firstly,  from (\ref{asd}) and (\ref{KKT1}), we have
\begin{equation*}
\begin{split}
&\frac{d}{dt}\mathcal{L}_t(x(t),\lambda_t)\\
=&\langle\nabla f(x(t)),\dot{x}(t) \rangle+\langle A^\top \lambda_t,\dot{x}(t) \rangle+\langle Ax(t)-b,\dot{\lambda}_t\rangle\\
&+\frac{c}{t^p}(\langle x(t),\dot{x}(t) \rangle-\langle \lambda_t,\dot{\lambda}_t \rangle) -\frac{1}{2}cpt^{-p-1}\left(\|x(t)\|^2-\|\lambda_t\|^2\right)\\
=&\langle \nabla_x\mathcal{L}_t(x(t),\lambda_t),\dot{x}(t)\rangle+\left\langle Ax(t)-b-\frac{c}{t^p}\lambda_t,\dot{\lambda}_t\right\rangle-\frac{1}{2}cpt^{-p-1}\left(\|x(t)\|^2-\|\lambda_t\|^2\right)\\
=&\langle \nabla_x\mathcal{L}_t(x(t),\lambda_t),\dot{x}(t)\rangle+\langle A(x(t)-x_t),\dot{\lambda}_t\rangle-\frac{1}{2}cpt^{-p-1}\left(\|x(t)\|^2-\|\lambda_t\|^2\right).
\end{split}
\end{equation*}
Then, it follows from Lemma \ref{le2} that
\begin{equation*}
\begin{split}
&\frac{d}{dt}\left(\mathcal{L}_t(x(t),\lambda_t)-\mathcal{L}_t(x_t,\lambda_t)\right)\\
=&\langle\nabla_x\mathcal{L}_t(x(t),\lambda_t),\dot{x}(t)\rangle+\langle A(x(t)-x_t),\dot{\lambda}_t\rangle-\frac{1}{2}cpt^{-p-1}\left(\|x(t)\|^2-\|x_t\|^2\right).
\end{split}
\end{equation*}
Consequently,
\begin{equation}\label{mu112x}
\begin{split}
&\frac{d}{dt}\Big(a(t)t^q\left(\mathcal{L}_t(x(t),\lambda_t)-\mathcal{L}_t(x_t,\lambda_t)\right)\Big)\\
=&(\dot{a}(t)t^q+qa(t)t^{q-1})
\left(\mathcal{L}_t(x(t),\lambda_t)-\mathcal{L}_t(x_t,\lambda_t)\right)+a(t)t^q\Big(\langle \nabla_x\mathcal{L}_t(x(t),\lambda_t),\dot{x}(t)\rangle \\
&\left.+\langle A(x(t)-x_t),\dot{\lambda}_t\rangle-\frac{1}{2}cpt^{-p-1}\left(\|x(t)\|^2-\|x_t\|^2\right)\right).
\end{split}
\end{equation}

Secondly, let $
\vartheta(t)\coloneqq(\alpha-1)(x(t)-x_t)+t^q \Big(m(t)\dot{x}(t)+\gamma \nabla_{x}\mathcal{L}_t\left(x(t),\lambda(t)\right)\Big).
$
Then,
\begin{equation*}
\begin{split}
\dot{\vartheta}(t)=&(\alpha-1)(\dot{x}(t)-\dot{x}_t)+qt^{q-1}\Big(m(t)\dot{x}(t)+\gamma \nabla_{x}\mathcal{L}_t\big(x(t),\lambda(t)\big)\Big)\\
&+t^q\Big(\dot{m}(t)\dot{x}(t)+m(t)\ddot{x}(t)+\gamma\frac{d}{dt} \nabla_{x}\mathcal{L}_t\big(x(t),\lambda(t)\big)\Big).
\end{split}
\end{equation*}
This together with the first equality of   (\ref{dyn1}) yields
\begin{equation*}
\dot{\vartheta}(t)=(qm(t)t^{q-1}+\dot{m}(t)t^q-1)\dot{x}(t)-(\alpha-1)\dot{x}_t+(\gamma qt^{q-1}-t^{q+s})\nabla_{x}\mathcal{L}_t\left(x(t),\lambda(t)\right).
\end{equation*}
Therefore,
\begin{eqnarray}\label{dyn1sun}
\begin{split}
&\left \langle \vartheta(t),\dot{\vartheta}(t)\right \rangle\\
=&(\alpha-1)\big(qm(t)t^{q-1}+\dot{m}(t)t^q-1\big)\left \langle x(t)-x_t,\dot{x}(t)\right \rangle\\
&+m(t)t^q\big(qm(t)t^{q-1}+\dot{m}(t)t^{q}-1\big)\| \dot{x}(t)\|^2 \\
&+\big(2\gamma qm(t)t^{2q-1}+\gamma\dot{m}(t) t^{2q}-\gamma t^q-m(t)t^{2q+s}\big)\left \langle\nabla_x\mathcal{L}_t(x(t),\lambda(t)),\dot{x}(t)\right \rangle\\
&-(\alpha-1)^2\langle x(t)-x_t,\dot{x}_t\rangle-(\alpha-1)m(t)t^q\langle \dot{x}(t),\dot{x}_t\rangle\\
&-(\alpha-1)\gamma t^{q}\langle \nabla_{x}\mathcal{L}_t(x(t),\lambda(t)),\dot{x}_t\rangle\\
&+(\alpha-1)\big(\gamma q t^{q-1}-t^{q+s}\big)\left \langle\nabla_x\mathcal{L}_t(x(t),\lambda(t)),x(t)-x_t\right \rangle\\
&+\gamma t^q\big(\gamma qt^{q-1}- t^{q+s}\big)\|\nabla_x\mathcal{L}_t(x(t),\lambda(t))\|^2.
\end{split}
\end{eqnarray}
Note that $
a(t)=m(t)t^{q+s} -2\gamma q m(t) t^{q-1}-\gamma\dot{m}(t)t^{q}+\gamma
$
and
$
b(t)=-(\alpha-1)\big(q m(t)t^{q-1}+\dot{m}(t)t^q-1\big).
$
Then, it follows from (\ref{dyn1sun}) that
\begin{eqnarray}\label{mu0001}
\begin{split}
\left \langle \vartheta(t),\dot{\vartheta}(t)\right \rangle
=&-b(t)\left \langle x(t)-x_t,\dot{x}(t)\right \rangle-\frac{1}{\alpha-1}m(t)t^qb(t)\| \dot{x}(t)\|^2\\
 &-a(t)t^q\left \langle\nabla_x\mathcal{L}_t(x(t),\lambda(t)),\dot{x}(t)\right \rangle-(\alpha-1)^2\langle x(t)-x_t,\dot{x}_t\rangle\\
 &-(\alpha-1)m(t)t^q\langle \dot{x}(t),\dot{x}_t\rangle-(\alpha-1)\gamma t^{q}\langle \nabla_{x}\mathcal{L}_t(x(t),\lambda(t)),\dot{x}_t\rangle\\
&+(\alpha-1)\big(\gamma q t^{q-1}-t^{q+s}\big)\left \langle\nabla_x\mathcal{L}_t(x(t),\lambda(t)),x(t)-x_t\right \rangle\\
&+\gamma t^q\big(\gamma qt^{q-1}- t^{q+s}\big)\|\nabla_x\mathcal{L}_t(x(t),\lambda(t))\|^2.
\end{split}
\end{eqnarray}
Since
$\mathcal{L}_t(\cdot,\lambda_t)$ is $\frac{c}{t^p}$-strong convex function, we   deduce from $(\ref{strong}) $ that
\begin{equation}\label{stre0}
\left \langle\nabla_x\mathcal{L}_t(x(t),\lambda_t),{x}(t)-x_t\right \rangle \geq\mathcal{L}_t(x(t),\lambda_t)-\mathcal{L}_t(x_t,\lambda_t)+\frac{c}{2t^p}\|x(t)-x_t\|^2.
\end{equation}
Note that there exists $t_2\geq t_1$ such that
\begin{equation}\label{strex0}
\gamma qt^{q-1}-t^{q+s}\leq 0,\forall  t\geq t_2.
\end{equation}
Then, combining
$(\ref{stre0}) $, $(\ref{strex0}) $ and $
\nabla_x\mathcal{L}_t(x(t),\lambda(t))=\nabla_x\mathcal{L}_t(x(t),\lambda_t)+A^\top(\lambda(t)-\lambda_t)
$,
we  deduce from $(\ref{mu0001}) $ that for any $t\geq t_2,$
\begin{equation}\label{labda0}
\begin{split}
&\left \langle \vartheta(t),\dot{\vartheta}(t)\right \rangle\\
\leq&-b(t)\left \langle x(t)-x_t,\dot{x}(t)\right \rangle-\frac{1}{\alpha-1}m(t)t^qb(t)\| \dot{x}(t)\|^2\\
& -a(t)t^q\left \langle\nabla_x\mathcal{L}_t(x(t),\lambda_t),\dot{x}(t)\right \rangle-a(t) t^q\left\langle A^\top(\lambda(t)-\lambda_t),\dot{x}(t)\right\rangle\\
&-(\alpha-1)^2\langle x(t)-x_t,\dot{x}_t\rangle-(\alpha-1)m(t)t^q\langle \dot{x}(t),\dot{x}_t\rangle\\
&-(\alpha-1)\gamma t^{q}\langle \nabla_{x}\mathcal{L}_t(x(t),\lambda(t)),\dot{x}_t\rangle\\
&+(\alpha-1)(\gamma qt^{q-1}-t^{q+s})\bigg(\mathcal{L}_t(x(t),\lambda_t)-\mathcal{L}_t(x_t,\lambda_t)+\frac{c}{2t^p}\|x(t)-x_t\|^2\bigg)\\
&+(\alpha-1)\big(\gamma q t^{q-1}-t^{q+s}\big)\left \langle A^\top(\lambda(t)-\lambda_t),x(t)-x_t\right \rangle\\
&+\gamma t^q(\gamma qt^{q-1}-t^{q+s})\|\nabla_{x}\mathcal{L}_t(x(t),\lambda(t))\|^2.
\end{split}
\end{equation}

 Thirdly,
\begin{equation}\label{labda00}
\begin{split}
&\frac{d}{dt}\left( \frac{1}{2}b(t)\|x(t)-x_t\|^2 \right)\\
=&\frac{1}{2}\dot{b}(t)\|x(t)-x_t\|^2+b(t)\Big(\left \langle x(t)-x_t,\dot{x}(t)\right \rangle-\left \langle x(t)-x_t,\dot{x}_t\right \rangle\Big).
\end{split}
\end{equation}

Fourthly,
\begin{equation*}
\begin{split}
&\frac{d}{dt}\Big(\frac{1}{2}\|\lambda(t)-\lambda_t\|^2\Big)\\
=&\langle\lambda(t)-\lambda_t,\dot{\lambda}(t)-\dot{\lambda}_t\rangle\\
=&(\alpha-1)\big(t^{q+s}-\gamma qt^{q-1}\big)\big\langle\lambda(t)-\lambda_t,\nabla_\lambda \mathcal{L}_t(x(t)+\theta(t)\dot{x}(t),\lambda(t))\rangle-\langle \lambda(t)-\lambda_t,\dot{\lambda}_t\rangle\\
=&(\alpha-1)\big(t^{q+s}-\gamma qt^{q-1}\big)\big\langle\lambda(t)-\lambda_t,\nabla_\lambda\mathcal{L}_t(x_t,\lambda(t))\big\rangle\\
&+(\alpha-1)\big(t^{q+s}-\gamma qt^{q-1}\big)\big\langle\lambda(t)-\lambda_t,A(x(t)+\theta(t)\dot{x}(t)-x_t)\big\rangle-\langle \lambda(t)-\lambda_t,\dot{\lambda}_t\rangle,
\end{split}
\end{equation*}
where the second equality holds due to the second equality of (\ref{dyn1}) and   the last equality holds due to $$\nabla_\lambda \mathcal{L}_t(x(t)+\theta(t)\dot{x}(t),\lambda(t))
=\nabla_\lambda\mathcal{L}_t(x_t,\lambda(t))+A(x(t)+\theta(t)\dot{x}(t)-x_t).$$
Further, from the $\frac{c}{t^p}$-strongly convexity of $-\mathcal{L}_t(x_t,\cdot)$, we deduce that
\begin{equation*}
\begin{split}
-\big\langle\lambda(t)-\lambda_t,\nabla_\lambda\mathcal{L}_t(x_t,\lambda(t))\big\rangle
&\geq\mathcal{L}_t(x_t,\lambda_t)-\mathcal{L}_t(x_t,\lambda(t))+\frac{c}{2t^p}\|\lambda(t)-\lambda_t\|^2\\
&\geq\frac{c}{2t^p}\|\lambda(t)-\lambda_t\|^2.
\end{split}
\end{equation*}
Then,
\begin{equation}\label{labda}
\begin{split}
&\frac{d}{dt}\Big( \frac{1}{2}\|\lambda(t)-\lambda_t\|^2 \Big)\\
\leq & -\frac{c}{2t^p}(\alpha-1)\big(t^{q+s}-\gamma q t^{q-1}\big)\|\lambda(t)-\lambda_t\|^2\\
&+(\alpha-1)\big(t^{q+s}-\gamma q t^{q-1}\big)\big\langle\lambda(t)-\lambda_t,A(x(t)+\theta(t)\dot{x}(t)-x_t)\big\rangle\\
&-\langle \lambda(t)-\lambda_t,\dot{\lambda}_t\rangle\\
= & \frac{1}{2 }c(\alpha-1)\big(\gamma q t^{q-p-1}-t^{q+s-p}\big)\|\lambda(t)-\lambda_t\|^2+a(t)t^q\big\langle A^\top (\lambda(t)-\lambda_t),\dot{x}(t) \big\rangle\\
&+(\alpha-1)\big(t^{q+s}-\gamma q t^{q-1}\big)\big\langle\lambda(t)-\lambda_t,A(x(t)-x_t)\big\rangle\\
&-\langle \lambda(t)-\lambda_t,\dot{\lambda}_t\rangle,
\end{split}
\end{equation}
where the   equality holds due to   (\ref{theta}).

Therefore,  together with (\ref{mu112x}), (\ref{labda0}), (\ref{labda00}) and (\ref{labda}),  we obtain that for any $t\geq t_1,$
\begin{equation*}
\begin{split}
&\dot{\mathcal{E}}(t)\\
\leq&\Big(\dot{a}(t)t^q+q a(t)t^{q-1}+(\alpha-1)\big(\gamma q t^{q-1}-t^{q+s}\big)\big)\big(\mathcal{L}_t(x(t),\lambda_t)-\mathcal{L}_t(x_t,\lambda_t)\Big)\\
&+a(t)t^q\big\langle A(x(t)-x_t),\dot{\lambda}_t\big \rangle -\frac{1}{2}cpa(t)t^{q-p-1}\big(\|x(t)\|^2-\|x_t\|^2\big)\\
&-\frac{1}{\alpha-1}m(t)t^qb(t)\|\dot{x}(t)\|^2-\left((\alpha-1)^2+b(t)\right)\langle x(t)-x_t,\dot{x}_t \rangle\\
&-(\alpha-1)m(t)t^q\left \langle \dot{x}(t),\dot{x}_t\right \rangle-(\alpha-1)\gamma t^q\left \langle \nabla_{x}\mathcal{L}_t(x(t),\lambda(t)),\dot{x}_t \right \rangle\\
&+\frac{1}{2}\left( c(\alpha-1)\big(\gamma q t^{q-p-1}-t^{q+s-p}\big)+\dot{b}(t)\right)  \|x(t)-x_t\|^2  \\
&+\gamma t^q(\gamma qt^{q-1}-t^{q+s})\|\nabla_x\mathcal{L}_t(x(t),\lambda(t))\|^2\\
&+\frac{1}{2 }c(\alpha-1)\big(\gamma q t^{q-p-1}-t^{q+s-p}\big)\|\lambda(t)-\lambda_t\|^2-\langle \lambda(t)-\lambda_t,\dot{\lambda}_t\rangle.
\end{split}
\end{equation*}
Note that
\begin{eqnarray*}
\left\{\begin{split}
a(t)t^q\big \langle A(x(t)-x_t),\dot{\lambda}_t\big \rangle\leq& \frac{1}{2}a(t)t^q\left( \frac{1}{4 }c (\alpha-1)t^{-q-p}\|x(t)-x_t\|^2\right.\\
&\left.+\frac{4 }{c(\alpha-1)}t^{q+p}\|A\|^2\|\dot{\lambda}_t\|^2\right),\\
-\left((\alpha-1)^2+b(t)\right)\big \langle x(t)-x_t,\dot{x}_t \big \rangle
\leq &\frac{1}{2}\left((\alpha-1)^2+b(t)\right)\left( \frac{c}{4\alpha}t^{q+s-p}\|x(t)-x_t\|^2\right. \\
&\left.+\frac{4\alpha}{c}t^{p-q-s}\|\dot{x}_t\|^2\right),\\
-(\alpha-1)m(t)t^q\big \langle \dot{x}(t),\dot{x}_t\big \rangle
\leq&\frac{1}{2}m(t)t^q\left(\|\dot{x}(t)\|^2+ (\alpha-1)^2\|\dot{x}_t\|^2\right),\\
-(\alpha-1)\gamma t^q\left \langle \nabla_x\mathcal{L}_t(x(t),\lambda(t)),\dot{x}_t\right \rangle
\leq&\frac{1}{2}\gamma t^q \Big(t^{q+s}\|\nabla_x\mathcal{L}_t(x(t),\lambda(t))\|^2\\
&+ (\alpha-1)^2t^{-q-s}\|\dot{x}_t\|^2 \Big),\\
-\left \langle {\lambda}(t)-\lambda_t,\dot{\lambda}_t\right \rangle
\leq&\frac{1}{2}\left(\frac{1}{a_1}t^{q+s-p}\|{\lambda}(t)-\lambda_t\|^2+ {a_1}{t^{p-q-s}}\|\dot{\lambda}_t\|^2\right).
\end{split}
\right.
\end{eqnarray*}
Here  $a_1>0$ is an  arbitrary constant.
Thus, for any $t\geq t_1,$
\begin{equation}\label{M}
\begin{split}
&\dot{ \mathcal{E} }(t) \\
\leq&\Big(\dot{a}(t)t^q+qa(t)t^{q-1}+(\alpha-1)\big(\gamma q t^{q-1}-t^{q+s}\big) \Big)\big(\mathcal{L}_t(x(t),\lambda_t)-\mathcal{L}(x_t,\lambda_t)\big)\\
&+\frac{1}{2}\left(c(\alpha-1)\big(\gamma q t^{q-p-1}-t^{q+s-p}\big)+ \dot{b}(t) +\frac{1}{4}c(\alpha-1) a(t)t^{-p} \right.\\
&~~~\left.+\frac{1}{4\alpha}c (\alpha-1)^2 t^{q+s-p}+\frac{1}{4\alpha}cb(t) t^{q+s-p}\right)\|x(t)-x_t\|^2\\
&+\left(\frac{2}{c(\alpha-1)}\|A\|^2a(t)t^{2q+p}+\frac{1}{2}a_1 t^{p-q-s}\right)\|\dot{\lambda}_t\|^2\\
&+\frac{1}{2}cp a(t)t^{q-p-1}\big(\|x_t\|^2-\|x(t)\|^2\big)+\left( -\frac{1}{\alpha-1}b(t)+\frac{1}{2}\right)m(t)t^{q}\| \dot{x}(t)\|^2\\
&+\left(\frac{2\alpha}{c}(\alpha-1)^2 t^{p-q-s}+\frac{2\alpha}{c}b(t)t^{p-q-s}+\frac{1}{2}(\alpha-1)^2m(t)t^q+\frac{1}{2 }\gamma(\alpha-1)^2 {t^{-s}}\right)\|\dot{x}_t\|^2\\
&+\gamma t^q\left(\gamma q t^{q-1}-\frac{1}{2}t^{q+s} \right)\|\nabla_x\mathcal{L}_t(x(t),\lambda(t)\|^2\\
&+\frac{1}{2}\left(c(\alpha-1)(\gamma q t^{q-p-1}-t^{q+s-p}) +\frac{ 1}{a_1}t^{q+s-p}\right)\|\lambda(t)-\lambda_t\|^2.
\end{split}
\end{equation}
  On the other hand, note that
\begin{equation*}
\begin{split}
&\frac{1}{2}\|(\alpha-1)(x(t)-x_t)+t^q \left(m(t)\dot{x}(t)+\gamma \nabla_{x}\mathcal{L}_t\left(x(t),\lambda(t)\right)\right)\|^2\\
\leq& (\alpha-1)^2\|x(t)-x_t\|^2+2{m^2(t)}t^{2q}\|\dot{x}(t)\|^2+2\gamma^2t^{2q}\|\nabla_{x}\mathcal{L}_t\left(x(t),\lambda(t)\right)\|^2.
\end{split}
\end{equation*}
Then, it follows from  (\ref{defen}) that
\begin{equation*}
\begin{split}
 \mathcal{E}(t)
\leq& a(t) t^{q }\left(\mathcal{L}_t(x(t),\lambda_t)-\mathcal{L}_t(x_t,\lambda_t)\right)
+ \frac{1}{2}\left( 2 (\alpha-1)^2+ b(t)  \right)   \|x(t)-x_t\|^2\\
&+2 {m^2(t)}t^{2q }\| \dot{x}(t)\|^2+2  \gamma^2t^{2q }\|\nabla_x\mathcal{L}_t(x(t),\lambda(t))\|^2+\frac{1}{2}  \|\lambda(t)-\lambda_t\|^2.
\end{split}
\end{equation*}
This  together with (\ref{M})  yields  for any $ t\geq t_1 $,
\begin{eqnarray}\label{pro04100}
\begin{split}
&\dot{ \mathcal{E} }(t)+\frac{M}{t^r} {\mathcal{E}}(t)\\
\leq&\Big(\dot{a}(t)t^q+qa(t)t^{q-1}+(\alpha-1)\big(\gamma q t^{q-1}-t^{q+s}\big)+Ma(t)t^{q-r}\Big)\big(\mathcal{L}_t(x(t),\lambda_t)-\mathcal{L}(x_t,\lambda_t)\big)\\
&+\frac{1}{2}\left(c(\alpha-1)\big(\gamma q t^{q-p-1}-t^{q+s-p}\big)+ \dot{b}(t)+\frac{1}{4}c(\alpha-1) a(t)t^{-p} +\frac{1}{4\alpha}c (\alpha-1)^2 t^{q+s-p}\right.\\
&~~~\left.+\frac{1}{4\alpha}cb(t) t^{q+s-p}+2M(\alpha-1)^2t^{-r} +M b(t)t^{-r}\right)\|x(t)-x_t\|^2\\
&+\left(\frac{2}{c(\alpha-1)}\|A\|^2a(t)t^{2q+p}+\frac{1}{2}a_1 t^{p-q-s}\right)\|\dot{\lambda}_t\|^2+\frac{1}{2}cp a(t)t^{q-p-1}\big(\|x_t\|^2-\|x(t)\|^2\big)\\
&+\left(2Mm(t)t^{q-r}-\frac{1}{\alpha-1}b(t)+\frac{1}{2}\right)m(t)t^{q}\| \dot{x}(t)\|^2\\
&+\left(\frac{2\alpha}{c}(\alpha-1)^2 t^{p-q-s}+\frac{2\alpha}{c}b(t)t^{p-q-s}+\frac{1}{2}(\alpha-1)^2m(t)t^q+\frac{1}{2 }\gamma(\alpha-1)^2 {t^{-s}}\right)\|\dot{x}_t\|^2\\
&+\gamma t^q\left(\gamma q t^{q-1}-\frac{1}{2}t^{q+s}+2M\gamma t^{q-r}\right)\|\nabla_x\mathcal{L}_t(x(t),\lambda(t)\|^2\\
&+\frac{1}{2}\left(c(\alpha-1)(\gamma q t^{q-p-1}-t^{q+s-p})+\frac{ 1}{a_1}t^{q+s-p}+ M t^{-r}\right)\|\lambda(t)-\lambda_t\|^2,
\end{split}
\end{eqnarray}
where    $r=\max\{q,p-q-s\}$ and $0<M<\min\left\{\frac{3c}{4(2\alpha-1)},c(\alpha-1)-\frac{1}{a_1}\right\}$.

Now, we analyze the coefficients on the right hand side of  (\ref{pro04100}).

$\mathbf{\textup{(i)}}$. Note that  $
a(t)=m(t)t^{q+s} -2\gamma q m(t) t^{q-1}-\gamma\dot{m}(t)t^{q}+\gamma
$ and
$
\dot{a}(t)=\dot{m}(t)t^{q+s}+(q+s)m(t)t^{q+s-1}-3\gamma q\dot{m}(t)t^{q-1}
-2\gamma q(q-1) {m}(t)t^{q-2}-\gamma \ddot{m}(t)t^{q}.
$
 Then, it is easy to show that
\begin{eqnarray}\label{41the1x}
\begin{split}
&\dot{a}(t)t^q+qa(t)t^{q-1}+(\alpha-1)\big(\gamma q t^{q-1}-t^{q+s}\big)+Ma(t)t^{q-r}\\
=&-2\gamma q(2q-1)m(t)t^{2q-2}-2q\gamma Mm(t) t^{2q-r-1}+(2q+s)m(t)t^{2q+s-1}\\
&+Mm(t)t^{2q+s-r}-4\gamma q\dot{m}(t)t^{2q-1}- M\gamma\dot{m}(t)t^{2q-r}+\dot{m}(t)t^{2q+s}\\
&-\gamma\ddot{m}(t)t^{2q}
-(\alpha-1)t^{q+s}+ \alpha  \gamma q t^{q-1}+M\gamma t^{q-r}.
\end{split}
\end{eqnarray}
From    Assumptions  \textup{\ref{assume1}}  and  \textup{\ref{assume2}}, we   demonstrate that,  for $t$ big enough,
\begin{equation*}
\left\{\begin{split}
&m(t)t^{2q-2}<m(t)t^{2q-r-1}<m(t)t^{2q+s-1}<m(t)t^{2q+s-r}\leq k_1t^{q+s-r},\\
&|\dot{m}(t)|t^{2q-1}<|\dot{m}(t)|t^{2q-r}<|\dot{m}(t)|t^{2q+s}\leq k_1k_2t^{q+s-1},\\
&|\ddot{m}(t)|t^{2q}<k_1k_2t^{q-2}.
\end{split}\right.
\end{equation*}
Then, together with $0\leq q\leq r<1$, $q-2<q+s-1<q+s-r<q+s$ and $ \alpha>1 $, we   deduce from (\ref{41the1x}) that there exist  $ t_2\geq t_1 $ and $ {c}_1>0$ such that
\begin{equation*}
\dot{a}(t)t^q+qa(t)t^{q-1}+(\alpha-1)\big(\gamma q t^{q-1}-t^{q+s}\big)+Ma(t)t^{q-r}
\leq -c_1t^{q+s}<0, \ \ \forall t\geq t_2.
\end{equation*}

$\mathbf{\textup{(ii)}}$. Consider the coefficient of $\|x(t)-x_t\|^2$.  Let
\begin{eqnarray*}
\begin{split}
l(t)=&c(\alpha-1)\big(\gamma q t^{q-p-1}-t^{q+s-p}\big)+ \dot{b}(t)+\frac{1}{4}c(\alpha-1) a(t)t^{-p}\\
&+\frac{1}{4\alpha}c (\alpha-1)^2 t^{q+s-p}
+\frac{1}{4\alpha}cb(t) t^{q+s-p}+2M(\alpha-1)^2t^{-r} +M b(t)t^{-r} .
\end{split}
\end{eqnarray*}
Note that $
a(t)=m(t)t^{q+s} -2\gamma q m(t) t^{q-1}-\gamma\dot{m}(t)t^{q}+\gamma
$,  $
b(t)=-(\alpha-1)\big(q m(t)t^{q-1}+\dot{m}(t)t^q-1\big)
$ and $\dot{b}(t)=-(\alpha-1)(2q\dot{m}(t)t^{q-1}+q(q-1)m(t)t^{q-2}+\ddot{m}(t)t^q)$. Then, $l(t)$ can be rewritten as
\begin{equation*}
\begin{split}
l(t)=&c(\alpha-1) \gamma q t^{q-p-1} -\frac{3}{4}c(\alpha-1) t^{q+s-p}-2 q(\alpha-1)\dot{m}(t)t^{q-1}\\
&- q(q-1)(\alpha-1) m(t)t^{q-2}
- (\alpha-1) \ddot{m}(t)t^q\\
&+\frac{1}{4}c(\alpha-1)\Big(m(t)t^{q+s-p}-2\gamma qm(t)t^{q-p-1}-\gamma  \dot{m}(t)t^{q-p}+\gamma t^{-p}\Big) \\
&-\frac{1}{4\alpha}c(\alpha-1)\Big( qm(t) t^{2q+s-p-1}+ \dot{m}(t)t^{2q+s-p}\Big)\\
&+\Big(2M(\alpha-1)^{2}+M(\alpha-1)\Big)t^{-r}- M(\alpha-1)q m(t)t^{q-r-1}- M(\alpha-1) \dot{m}(t)t^{q-r}.
\end{split}
\end{equation*}
This together with $ \alpha>1 $ and $m(t)\geq 0 $ yields
\begin{equation}\label{41the400l}
\begin{split}
l(t)
\leq&c(\alpha-1) \gamma q t^{q-p-1}-\frac{3}{4}c(\alpha-1) t^{q+s-p}-2 q(\alpha-1)\dot{m}(t)t^{q-1}\\
&- q(q-1)(\alpha-1) m(t)t^{q-2}
- (\alpha-1) \ddot{m}(t)t^q\\
&+\frac{1}{4}c(\alpha-1)\Big(m(t)t^{q+s-p}-\gamma  \dot{m}(t)t^{q-p}+\gamma t^{-p}\Big) -\frac{1}{4\alpha}c(\alpha-1) \dot{m}(t)t^{2q+s-p}\\
&+\Big(2M(\alpha-1)^{2}+M(\alpha-1)\Big)t^{-r}- M(\alpha-1) \dot{m}(t)t^{q-r}.
\end{split}
\end{equation}
 From    Assumptions  \textup{\ref{assume1}}  and  \textup{\ref{assume2}}, we   demonstrate that,  for $t$ big enough,
\begin{equation}\label{41the400}
\left\{\begin{split}
&|\dot{m}(t)|t^{q-1}<|\dot{m}(t)|t^{q-r}<k_1k_2t^{-r-1},~m(t)t^{q-2}<k_1t^{-2},~|\ddot{m}(t)|t^{q}<k_1k_2t^{-2},\\
&m(t)t^{q+s-p}<k_1t^{s-p},~|\dot{m}(t)|t^{q-p}<k_1k_2t^{-p-1},~|\dot{m}(t)|t^{2q+s-p}<k_1k_2t^{q+s-p-1}.
\end{split}\right.
\end{equation}
Moreover, from $ 0\leq q \leq r <1 $, $ s>0,$  $p>0 $ and  $ r \geq p-q-s$, we have
\begin{equation}\label{41the4010}
 -2<-r-1<q+s-p\mbox{ and } -p-1<q+s-p-1<s-p<q+s-p.
\end{equation}
Combining $\mathrm{(\ref{41the400})}$, $\mathrm{(\ref{41the4010})}$,   $q-p-1<-p<q+s-p$, $r=\max\{q,p-q-s\}$ and $0<M<\frac{3c}{4(2\alpha-1)}$,
 we   deduce from $\mathrm{(\ref{41the400l})}$ that there exist  $ t_3\geq t_2$ and $c_2>0$ such that
$$l(t)\leq -c_2 t^{q+s-p}<0,\ \  \forall t\geq t_3.$$

$\mathbf{\textup{(iii)}}$.  Let us examine the coefficient of $  \|\dot{x}(t)\|^2$. Clearly, for $t$ big enough, $
m(t)t^{q-1}<m(t)<k_1t^{-q},$ $|\dot{m}(t)|t^{q}<k_1k_2t^{-2}$.
Together with $-2<-q<0$ and $ m(t)\geq 0 $, there exists $ t_4\geq t_3$ such that
\begin{equation*}
\begin{split}
 &m(t)t^{q}\left(2M m(t)-\frac{1}{\alpha-1}b(t)+\frac{1}{2}\right)\\
=&m(t)t^{q}\left(2M m(t)+qm(t)t^{q-1}+\dot{m}(t)t^q-\frac{1}{2}\right)\leq0,\  \forall t\geq t_4.
\end{split}
\end{equation*}

$\mathbf{\textup{(iv)}}$.  We consider the coefficient of $\|\nabla_x\mathcal{L}_t(x(t),\lambda(t)\|^2$.  Indeed, from $q-1<q-r<q+s$,  there exists $ t_5\geq t_4$ such that
\begin{equation*}
\gamma t^q\left(\gamma q t^{q-1}-\frac{1}{2}t^{q+s}+2M\gamma t^{q-r}\right)
 \leq0, \ \forall t\geq t_5.
\end{equation*}

$\mathbf{\textup{(v)}}$. Let us now examine  the coefficient of  $\|\lambda(t)-\lambda_t\|^2.$ Indeed, let $a_1>\frac{1}{c(\alpha-1) }$. Note that $ q-p-1<q+s-p
$, $r=\max\{q,p-q-s\}$  and $0<M<c(\alpha-1)-\frac{1}{a_1}$.
Then, there exist  $ t_6\geq t_5$ and $c_3>0$ such that
\begin{equation*}
\frac{1}{2}\left(c(\alpha-1)(\gamma q t^{q-p-1}-t^{q+s-p})+\frac{ 1}{a_1}t^{q+s-p}+ M t^{-r}\right)\leq-c_3 t^{q+s-p}<0 ,\ \forall t\geq t_6.
\end{equation*}

Now, by virtue of $\mathbf{\textup{(i)}}$-$\mathbf{\textup{(v)}}$, it follows from (\ref{pro04100}) that for any $t\geq t_6,$
\begin{eqnarray*}
\begin{split}
&\dot{ \mathcal{E} }(t)+\frac{M}{t^r} {\mathcal{E}}(t)\\
\leq& \left(\frac{2}{c(\alpha-1)}\|A\|^2a(t)t^{2q+p}+\frac{1}{2}a_1 t^{p-q-s}\right)\|\dot{\lambda}_t\|^2+\frac{1}{2}cp a(t)t^{q-p-1}\|x_t\|^2\\
&+\left(\frac{2\alpha}{c}(\alpha-1)^2 t^{p-q-s}+\frac{2\alpha}{c}b(t)t^{p-q-s}+\frac{1}{2}(\alpha-1)^2m(t)t^q+\frac{1}{2 }\gamma(\alpha-1)^2 {t^{-s}}\right)\|\dot{x}_t\|^2.
\end{split}
\end{eqnarray*}
 Let $ \bar{z}^*=(\bar{x}^*,\bar{\lambda}^*)  =\mathrm{Proj}_{\Omega}0$. By virtue of (\ref{z1}) and (\ref{z2}), we have
\begin{eqnarray}\label{the4001}
\dot{ \mathcal{E} }(t)+\frac{M}{t^r} {\mathcal{E}}(t)
\leq \Delta(t) \|\bar{z}^*\|^2,
\end{eqnarray}
where\begin{eqnarray}\label{the41}
\begin{split}
\Delta(t)
=& \frac{2p^2}{c(\alpha-1)}\|A\|^2a(t)t^{2q+p-2}+\frac{1}{2}a_1p^2 t^{p-q-s-2} +\frac{1}{2}cp a(t)t^{q-p-1} \\
&+ \frac{2\alpha}{c}p^2(\alpha-1)^2 t^{p-q-s-2}+\frac{2\alpha }{c}p^2 b(t)t^{p-q-s-2} \\
& +\frac{1}{2}(\alpha-1)^2p^2m(t)t^{q-2}+\frac{1}{2 }\gamma p^2(\alpha-1)^2 {t^{-s-2}}.
\end{split}
\end{eqnarray}
 Clearly, $(\ref{the41})$ can be written as
\begin{equation*}
\begin{split}
 \Delta(t)=&\frac{2p^2}{c(\alpha-1)}\|A\|^2 \left(m(t)t^{3q+p+s-2}-2\gamma qm(t)t^{3q+p-3}-\gamma\dot{m}(t)t^{3q+p-2}+\gamma t^{2q+p-2}\right)\\
 &+ \frac{1}{2}cp\left(m(t)t^{2q+s-p-1}-2\gamma q m(t) t^{2q-p-2}-\gamma\dot{m}(t)t^{2q-p-1}+\gamma t^{q-p-1}\right)\\
&+\left(\frac{2\alpha^2}{c}p^2(\alpha-1) +\frac{1}{2}a_1p^2\right)t^{p-q-s-2}-\frac{2\alpha}{c}(\alpha-1)p^2q m(t)t^{p-s-3}\\
&-\frac{2\alpha}{c}(\alpha-1)p^2\dot{m}(t)t^{p-s-2}+\frac{1}{2}(\alpha-1)^2p^2m(t)t^{q-2}+\frac{1}{2}\gamma p^2(\alpha-1)^2t^{-s-2}.
\end{split}
\end{equation*}
This follows that
\begin{equation*}
\begin{split}
 \Delta(t)
\leq & \frac{2p^2}{(\alpha-1)c}\|A\|^2 \left(m(t)t^{3q+p+s-2}-\gamma\dot{m}(t)t^{3q+p-2}+\gamma t^{2q+p-2}\right)\\
&+\frac{1}{2}cp\left(m(t)t^{2q+s-p-1}-\gamma\dot{m}(t)t^{2q-p-1}+\gamma t^{q-p-1}\right)\\
&+\left(\frac{2\alpha^2}{c}p^2(\alpha-1)+\frac{1}{2}a_1p^2\right)t^{p-q-s-2}-\frac{2\alpha}{c}(\alpha-1)p^2\dot{m}(t)t^{p-s-2} \\
&
+\frac{1}{2}(\alpha-1)^2p^2m(t)t^{q-2}+\frac{1}{2}\gamma p^2(\alpha-1)^2t^{-s-2}.
\end{split}
\end{equation*}
From Assumptions \ref{assume1} and \ref{assume2}, it is easy to verify that for $t$ big enough,
\begin{equation*}\left\{
\begin{split}
& -\gamma \dot{m}(t) t^{3q+p-2}< m(t)t^{3q+s+p-2},~\gamma t^{2q+p-2}<m(t)t^{3q+s+p-2},\\
&-\gamma \dot{m}(t) t^{2q-p-1}  <m(t)t^{2q+s-p-1}, ~\gamma t^{q-p-1}<m(t)t^{2q+s-p-1},\\
&-\dot{m}(t)t^{p-s-2} <m(t)t^{3q+s+p-2},~m(t)t^{q-2}<m(t)t^{3q+s+p-2}.
\end{split}\right.
\end{equation*}
This together with
$2q+p-2>p-q-s-2$ and $q-p-1>-s-2 $ yields there exist   $ t_7\geq t_6$ and a constant $\mathcal{C}_0>0$ such that
\begin{equation*}
\Delta(t)\|\bar{z}^*\|^2\leq \mathcal{C}_0 \left(m(t)t^{3q+s+p-2}+m(t)t^{2q+s-p-1}\right),~\forall~t\geq t_7.
\end{equation*}
Consequently, $(\ref{the4001})$ leads to
\begin{equation}\label{the41sx}
\dot{\mathcal{E}}(t)+\frac{M}{t^r}\mathcal{E}(t)\leq  \mathcal{C}_0 \left(m(t)t^{3q+s+p-2}+m(t)t^{2q+s-p-1}\right),~\forall t\geq t_7.
\end{equation}
The proof is complete. \qed
\end{proof}

Now, we  demonstrate  that both    the   convergence rates of the primal-dual gap,   the objective
residual, and the feasibility violation, as well as the strong convergence of the trajectory, can be achieved simultaneously.
\begin{theorem}\label{slow} Suppose that Assumptions \textup{\ref{assume1}}  and  \textup{\ref{assume2}} are satisfied. Let $ (x ,\lambda ):\left[t_0, +\infty\right)\to\mathcal{X} \times \mathcal{Y}  $ be a global solution of   system \textup{(\ref{dyn1})} and let $ (x_t,\lambda_t) $ be the saddle point of $ \mathcal{L}_t $. Suppose that  $ 0<p<1-q$,    $ 4q+s+p-2<0$  and  $3q+s-p-1<0.$ Then, for
$( {x}^*, {\lambda}^*)=\textup{Proj}_{\Omega}{0} $, we have
\begin{enumerate}
\item[{\rm (i)}] $(x(t),\lambda(t))$ converges strongly to $( {x}^*, {\lambda}^*)$, that is $
\lim_{t\to +\infty} \|(x(t),\lambda(t))-( {x}^*, {\lambda}^*)\| =0.
$
\item[{\rm (ii)}] If $ \frac{1-q}{2}\leq p<1-q$,   as $t\to +\infty$ it holds
\begin{eqnarray*}
\left\{\begin{split}
&\mathcal{L}(x(t),\lambda^*)-\mathcal{L}(x^*,\lambda^*)=\mathcal{O}\left( \sqrt{m(t)}t^{\frac{3q+s+p-2+r}{2}}+\frac{1}{t^p}\right), \\
&|f(x(t))-f(x^*)|=\mathcal{O}\left( \sqrt{m(t)}t^{\frac{3q+s+p-2+r}{2}}+\frac{1}{t^p}\right),\\
&\|Ax(t)-b\|=\mathcal{O}\left( \sqrt{m(t)}t^{\frac{3q+s+p-2+r}{2}}+\frac{1}{t^p}\right),\\
&\|(x(t)-x_t,\lambda(t)-\lambda_t)\|^2= \mathcal{O}\left( m(t)t^{3q+s+p-2+r}\right).
\end{split}\right.
\end{eqnarray*}

\item[{\rm (iii)}] If $0<p<\frac{1-q}{2} $, as $t\to +\infty$ it holds
\begin{eqnarray*}
\left\{\begin{split}
&\mathcal{L}(x(t),\lambda^*)-\mathcal{L}(x^*,\lambda^*)=\mathcal{O}\left( \sqrt{m(t)}t^{\frac{2q+s-p-1+r}{2}}+\frac{1}{t^p}\right),\\
&|f(x(t))-f(x^*)|=\mathcal{O}\left( \sqrt{m(t)}t^{\frac{2q+s-p-1+r}{2}}+\frac{1}{t^p}\right),\\
& \|Ax(t)-b\|=\mathcal{O}\left( \sqrt{m(t)}t^{\frac{2q+s-p-1+r}{2}}+\frac{1}{t^p} \right),\\
& \|(x(t)-x_t,\lambda(t)-\lambda_t)\|^2= \mathcal{O}\left( m(t)t^{2q+s-p-1+r}\right).
\end{split}\right.
\end{eqnarray*}
\end{enumerate}
Here $r=\max\{q,p-q-s\}$.
\end{theorem}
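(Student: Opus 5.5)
We integrate the differential inequality of Proposition \ref{lemma4.21}. Multiplying by the integrating factor $\Gamma(t)=\exp\big(\int_{t_7}^t \frac{M}{\tau^r}d\tau\big)=\exp\big(\frac{M}{1-r}(t^{1-r}-t_7^{1-r})\big)$ (note $r<1$, since $r=\max\{q,p-q-s\}$ and $p<1-q$), we get
\[
\mathcal{E}(t)\leq \frac{\Gamma(t_7)\mathcal{E}(t_7)}{\Gamma(t)}+\frac{\mathcal{C}_0}{\Gamma(t)}\int_{t_7}^t \Gamma(\tau)\, m(\tau)\big(\tau^{3q+s+p-2}+\tau^{2q+s-p-1}\big)\,d\tau .
\]
The first term decays exponentially in $t^{1-r}$. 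For the integral we use the standard estimate: if $g$ is regularly varying, in the sense that $\tau|\dot g(\tau)|\le K g(\tau)$ (true for $g=m\tau^\beta$ by Assumption \ref{assume2}), then $\int_{t_7}^t \Gamma g\,d\tau\le C\,\Gamma(t)\,g(t)\,t^r/M$ for $t$ large. To see this, integrate by parts using $\Gamma'=M\tau^{-r}\Gamma$: $\int \Gamma g=\int \Gamma'(g\tau^r/M)$, and the derivative of $g\tau^r$ is $O(g\tau^{r-1})=o(g\tau^r\cdot\tau^{-r})$ because $r<1$, so it is absorbed. This yields
\[
\mathcal{E}(t)=\mathcal{O}\big(m(t)t^{3q+s+p-2+r}+m(t)t^{2q+s-p-1+r}\big).
\]
Comparing exponents, $3q+s+p-2\ge 2q+s-p-1$ iff $p\ge\frac{1-q}{2}$, which produces the two regimes (ii) and (iii). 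Using $m(t)\le k_1t^{-q}$ and $r\le\max\{q,p-q-s\}$, the hypotheses $4q+s+p-2<0$ and $3q+s-p-1<0$ are exactly what makes these bounds tend to $0$. I need to check both cases $r=q$ and $r=p-q-s$. In the latter case the exponent $3q+s+p-2+r-q$ becomes $q+2p-2$, which is negative because $p<1-q$.

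Next we read off the estimates from $\mathcal{E}$. Since $\mathcal{L}_t(\cdot,\lambda_t)$ is $\frac{c}{t^p}$-strongly convex with minimizer $x_t$, we have $a(t)t^q(\mathcal{L}_t(x(t),\lambda_t)-\mathcal{L}_t(x_t,\lambda_t))\ge0$. Also $b(t)\to\alpha-1>0$ (by Assumptions \ref{assume1}, \ref{assume2}). Hence $\|x(t)-x_t\|^2+\|\lambda(t)-\lambda_t\|^2\le C\mathcal{E}(t)$, giving the last line of (ii)/(iii). Combining this with Lemma \ref{xtyt}(i), $(x_t,\lambda_t)\to(x^*,\lambda^*)$, we obtain (i).

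For the rates, write $\delta(t)=\|(x(t)-x_t,\lambda(t)-\lambda_t)\|$. Then
\[
\|Ax(t)-b\|\le\|A\|\|x(t)-x_t\|+\|Ax_t-b\|=\|A\|\|x(t)-x_t\|+\frac{c}{t^p}\|\lambda_t\|,
\]
using (\ref{KKT1}) and (\ref{z1}). This gives $\mathcal{O}(\sqrt{\mathcal{E}}+t^{-p})$.

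For the primal-dual gap, we write $\mathcal{L}(x(t),\lambda^*)-\mathcal{L}(x^*,\lambda^*)=f(x(t))-f(x^*)+\langle\lambda^*,Ax(t)-b\rangle$ and bound it through $\mathcal{L}_t$. The most robust route avoids extracting the first term of $\mathcal{E}$ (its coefficient $a(t)t^q$ may be small). Instead we use convexity and the KKT relations (\ref{KKT1}):
\[
f(x(t))-f(x_t)\le\langle\nabla f(x(t)),x(t)-x_t\rangle .
\]
However, $\nabla f(x(t))$ need not be bounded without a Lipschitz hypothesis. So instead we use
\[
0\le \mathcal{L}(x(t),\lambda^*)-\mathcal{L}(x^*,\lambda^*)\le \mathcal{L}_t(x(t),\lambda_t)-\mathcal{L}_t(x_t,\lambda_t)+\langle\lambda^*-\lambda_t,Ax(t)-b\rangle+\mathcal{O}(t^{-p}),
\]
where the $\mathcal{O}(t^{-p})$ collects the Tikhonov terms, which are bounded via (\ref{z1}) and the boundedness of $x(t)$. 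We bound $\mathcal{L}_t(x(t),\lambda_t)-\mathcal{L}_t(x_t,\lambda_t)\le\mathcal{E}(t)/(a(t)t^q)$. Here $a(t)\ge\gamma/2$ for large $t$ (since $m(t)t^{q+s}\ge\gamma$ by Assumption \ref{assume1} and the remaining terms vanish), so this term is $\mathcal{O}(\mathcal{E})$, which is dominated by $\sqrt{\mathcal{E}}$ once $\mathcal{E}\to0$. The inner product term is $\mathcal{O}(\sqrt{\mathcal{E}}+t^{-p})$.

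Finally, $|f(x(t))-f(x^*)|$ follows from the gap bound together with $f(x(t))-f(x^*)\ge-\|\lambda^*\|\|Ax(t)-b\|$, which comes from the saddle inequality.

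The main obstacle is the integration step. One must verify carefully the asymptotic lemma $\int\Gamma g\lesssim\Gamma g t^r$ under the hypotheses on $m$, including the case where the exponent of $g$ is negative, and confirm that the stated exponent inequalities force $\mathcal{E}(t)\to0$ in both regimes of $r$.
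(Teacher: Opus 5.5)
Your proposal is correct and follows essentially the same route as the paper. Your integrating factor $e^{\frac{M}{1-r}t^{1-r}}$, with the integration by parts that absorbs the derivative of $m(\tau)\tau^{\beta+r}$ via $t|\dot m(t)|\le k_2 m(t)$ and $r<1$, is the paper's argument, which the paper phrases through $\frac{d}{dt}\big(t^{\beta+r}e^{\frac{M}{1-r}t^{1-r}}\big)\ge \mathcal{C}_1t^{\beta}e^{\frac{M}{1-r}t^{1-r}}$. Likewise, reading off the feasibility bound from (\ref{KKT1}), the gap bound through the same inequality as (\ref{r2xz}), and strong convergence via Lemma \ref{xtyt} match the paper. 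The differences are only cosmetic: you treat both source terms at once before splitting into the two regimes, and you bound the Lagrangian term by $\mathcal{O}(\mathcal{E}(t))$ using a constant lower bound on $a(t)t^q$ rather than the paper's sharper $\mathcal{O}(t^{q+p-2+r})$; both are absorbed into the final rate.
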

\begin{proof}
We start with the energy function $ \mathcal{E}(t):\left[t_0,+\infty \right) \to \mathbb{R} $ defined  as (\ref{defen}).
Multiplying $ e^{\frac{M}{1-r}t^{1-r}} $ on both sides of (\ref{the41sx}), we have for any $t\geq t_7,$
\begin{equation}\label{define}
\frac{d}{dt}\left(e^{\frac{M}{1-r}t^{1-r}}\mathcal{E}(t) \right)\leq \mathcal{C}_0 \left(m(t)t^{3q+s+p-2}+m(t)t^{2q+s-p-1}\right)e^{\frac{M}{1-r}t^{1-r}}.
\end{equation}
Here $r=\max\{q,p-q-s\}$ and $0<M<\min\left\{\frac{3c}{4(2\alpha-1)},c(\alpha-1)-\frac{1}{a_1}\right\}$.

Now,  we consider the following two cases:

\textbf{Case I}: $ \frac{1-q}{2}\leq p<1-q$. In this case,  $ 3q+s+p -2\geq 2q+s-p-1 $. Consequently, from $(\ref{define})$,  there exist $\mathcal{C}'_0 \geq 0$ and  $ t_{8}\geq t_7 $ such that for any $t\geq t_8,$
\begin{equation}\label{define0}
\frac{d}{dt}\left(e^{\frac{M}{1-r}t^{1-r}}\mathcal{E}(t) \right)\leq \mathcal{C}'_0 m(t)t^{3q+s+p-2}e^{\frac{M}{1-r}t^{1-r}}.
\end{equation}
Integrating $(\ref{define0})$  over $ [t_8, t] $ where $t\geq t_8$, we have
\begin{equation}\label{dt-1-2}
\begin{split}
&e^{\frac{M}{1-r}t^{1-r}}\mathcal{E}(t)
\\ \leq &e^{\frac{M}{1-r}t_8^{1-r}}\mathcal{E}(t_8)+\mathcal{C}'_0\int_{t_8}^{t} m(\omega)\omega^{3q+s+p-2} e^{\frac{M}{1-r}\omega^{1-r}}d\omega.
\end{split}
\end{equation}
We consider the right hand side of $(\ref{dt-1-2})$.
Clearly, there exist $ t_9\geq t_8 $ and $ \mathcal{C}_1>0 $ such that for any $t\geq t_9$,
\begin{equation}\label{dt-1-20}
\begin{split}
&\frac{d}{dt}\left( t^{3q+s+p-2+r}e^{\frac{M}{1-r}t^{1-r}}\right)\\
=&\left((3q+s+p-2+r)t^{3q+s+p-3+r}+Mt^{3q+s+p-2}\right)e^{\frac{M}{1-r}t^{1-r}}\\
\geq& \mathcal{C}_1 t^{3q+s+p-2}e^{\frac{M}{1-r}t^{1-r}}.
\end{split}
\end{equation}
From $(\ref{dt-1-20})$ and $t |\dot{m}(t)|\leq {k_2} m(t)$, we have
\begin{equation}\label{dt-1-2001}
{\begin{split}
 &\int_{t_8}^{t}  m(\omega) \omega^{3q+s+p-2}e^{\frac{M}{1-r}\omega^{1-r}} d\omega \\
 \leq &\int_{t_8}^{t_9}  m(\omega) \omega^{3q+s+p-2}e^{\frac{M}{1-r}\omega^{1-r}} d\omega  +\frac{1}{\mathcal{C}_1} \int_{t_9}^{t}  m(\omega) \frac{d}{d\omega}\left( \omega^{3q+s+p-2+r}e^{\frac{M}{1-r}\omega^{1-r}}\right)d\omega\\
 =&\mathcal{N}_1 +\frac{1}{\mathcal{C}_1}  m(t)  t^{3q+s+p-2+r}e^{\frac{M}{1-r}t^{1-r}}-\frac{1}{\mathcal{C}_1}\int_{t_9}^{t} \dot{m}(\omega) \omega^{3q+s+p-2+r}e^{\frac{M}{1-r}\omega^{1-r}}d\omega\\
 \leq &\mathcal{N}_1+\frac{1}{\mathcal{C}_1} m(t)  t^{3q+s+p-2+r}e^{\frac{M}{1-r}t^{1-r}}+\frac{k_2 }{\mathcal{C}_1}\int_{t_9}^{t}m(\omega)\omega^{3q+s+p-3+r}e^{\frac{M}{1-r}\omega^{1-r}}d\omega,
 \end{split}}
 \end{equation}
 where $\mathcal{N}_1  =\int_{t_8}^{t_9}  m(\omega) \omega^{3q+s+p-2}e^{\frac{M}{1-r}\omega^{1-r}} d\omega-\frac{1}{\mathcal{C}_1} m(t_9)t_{9}^{3q+s+p-2+r}e^{\frac{M}{1-r}t_9^{1-r}}.$

By $r<1$, we have $3q+s+p-3+r<3q+s+p-2$. Then, there exists $t_{10}\geq t_{9}$ such that
  $\frac{k_2}{\mathcal{C}_1}m(t)t^{3q+s+p-3+r} \leq  \frac{1}{2}m(t)t^{3q+s+p-2} ,$ $\forall~t\geq t_{10}.$
Thus,
 \begin{equation}\label{dt-1-2002}
 \begin{split}
 &\frac{k_2}{\mathcal{C}_1}\int_{t_9}^{t}m(\omega)\omega^{3q+s+p-3+r}e^{\frac{M}{1-r}\omega^{1-r}}d\omega\\
 \leq & \frac{k_2}{\mathcal{C}_1}\int_{t_9}^{t_{10}}m(\omega)\omega^{3q+s+p-3+r}e^{\frac{M}{1-r}\omega^{1-r}}d\omega+\frac{1}{2}\int_{ t_{10}}^{t }m(\omega)\omega^{3q+s+p-2}e^{\frac{M}{1-r}\omega^{1-r}}d\omega\\
 =&\mathcal{N}_2+\frac{1}{2}\int_{t_8}^{t} m(\omega) \omega^{3q+s+p-2}e^{\frac{M}{1-r}\omega^{1-r}}d\omega,
 \end{split}
 \end{equation}
where $\mathcal{N}_2=\frac{k_2}{\mathcal{C}_1}\int_{t_9}^{t_{10}}m(\omega)\omega^{3q+s+p-3+r}e^{\frac{M}{1-r}\omega^{1-r}}d\omega-\frac{1}{2}\int_{ t_8}^{t_{10} } m(\omega) \omega^{3q+s+p-2}e^{\frac{M}{1-r}\omega^{1-r}}d\omega$.
Combining (\ref{dt-1-2001}) and (\ref{dt-1-2002}),  we obtain that  for all $t\geq  t_{10}$,
\begin{equation}\label{dt-1-2003}
\begin{split}
\int_{t_8}^{t}  m(\omega) \omega^{3q+s+p-2}e^{\frac{M}{1-r}\omega^{1-r}} d\omega
\leq 2(\mathcal{N}_1+\mathcal{N}_2)+\frac{2}{\mathcal{C}_1} m(t) t^{3q+s+p-2+r}e^{\frac{M}{1-r}t^{1-r}}.
\end{split}
\end{equation}
Then,   it follows from  $(\ref{dt-1-2})$ and $(\ref{dt-1-2003})$ that
\begin{equation}\label{dt-1-2345}
\begin{split}
&e^{\frac{M}{1-r}t^{1-r}}\mathcal{E}(t)\\
\leq &e^{\frac{M}{1-r}t_8^{1-r}}\mathcal{E}(t_8)+\mathcal{C}'_0\left(2(\mathcal{N}_1+\mathcal{N}_2)+\frac{2}{\mathcal{C}_1} m(t) t^{3q+s+p-2+r}e^{\frac{M}{1-r}t^{1-r}}\right).
\end{split}
\end{equation}
Let $\mathcal{N}_4:=e^{\frac{M}{1-r}t_8^{1-r}}\mathcal{E}(t_8)+ \mathcal{C}'_0(2(\mathcal{N}_1+\mathcal{N}_2))$ and $\mathcal{N}_5:=\frac{2\mathcal{C}'_0}{\mathcal{C}_1}$.
Together with $(\ref{dt-1-2345})$,  there exist  $t_{11}\geq t_{10}$ and $\mathcal{C}_3>0$ such that
\begin{equation*}
\mathcal{E}(t)
\leq\frac{\mathcal{N}_4}{e^{\frac{M}{1-r}t^{1-r}} }+\mathcal{N}_5 m(t)t^{3q+s+p-2+r}\leq \mathcal{C}_3 m(t)t^{3q+s+p-2+r},
~\forall~t\geq t_{11}.
\end{equation*}
According to  $(\ref{defen})$, we have
\begin{equation}\label{L2}
\mathcal{L}_t(x(t),\lambda_t)-\mathcal{L}_t(x_t,\lambda_t)= \mathcal{O}\left( t^{q+p-2+r}\right),~~~\mbox{as}~t\to +\infty,
\end{equation}
and
\begin{equation}\label{r1}
\|(x(t)-x_t,\lambda(t)-\lambda_t)\|^2= \mathcal{O}\left( m(t)t^{3q+s+p-2+r}\right),~~~\mbox{as}~t\to +\infty.
\end{equation}
By $(\ref{KKT1})$ and $(\ref{z1})$, it is easy to show that
\begin{equation}\label{r1x}
\|Ax(t)-b\|\leq\|A\|\|x(t)-x_t\|+\frac{c}{t^p}\|( {x}^*, {\lambda}^*)\|.
\end{equation}
This together with $(\ref{r1})$ yields
\begin{equation}\label{r2}
\|Ax(t)-b\|=\mathcal{O}\left( \sqrt{m(t)}t^{\frac{3q+s+p-2+r}{2}}+\frac{1}{t^p}\right),~\mbox{as}~t\to +\infty.
\end{equation}
By virtue of $(\ref{KKT})$ and $( {x}^*, {\lambda}^*)\in \Omega,$ we can easily get
\begin{equation}\label{r2xz}
\begin{split}
\mathcal{L}(x(t),\lambda^*)-\mathcal{L}(x^*,\lambda^*)
\leq&\mathcal{L}_t(x(t),\lambda_t)-\mathcal{L}_t(x_t,\lambda_t)
+\|\lambda_t-\lambda^*\| \|Ax(t)-b\|\\
&+\frac{c}{2t^p}(\|x^*\|^2-\|x(t)\|^2).
\end{split}
\end{equation}
Then, together with Lemma \ref{xtyt} (i), $(\ref{L2})$ and $(\ref{r2})$, it is easy to show that
$$
\mathcal{L}(x(t),\lambda^*)-\mathcal{L}(x^*,\lambda^*)=\mathcal{O}\left( \sqrt{m(t)}t^{\frac{3q+s+p-2+r}{2}}+\frac{1}{t^p}\right), ~~~\mbox{as}~t\to +\infty.
$$
Moreover, by $
|f(x(t))-f(x^*)|\leq\mathcal{L}(x(t),\lambda^*)-\mathcal{L}(x^*,\lambda^*)+\|\lambda^*\| \|Ax(t)-b\|,
$
we have
$$
|f(x(t))-f(x^*)|=\mathcal{O}\left( \sqrt{m(t)}t^{\frac{3q+s+p-2+r}{2}}+\frac{1}{t^p}\right),\mbox{as}~t\to +\infty.
$$

\textbf{Case II}: $ 0<p<\frac{1-q}{2} $.  In this case,  $0>2q+s-p-1> 3q+s+p-2$ . Consequently, from $(\ref{define})$,   there exists $\mathcal{C}''_0 \geq 0$ and  $ t'_{8}\geq t_7 $ such that for any $t\geq t'_8,$
\begin{equation*}
\frac{d}{dt}\left(e^{\frac{M}{1-r}t^{1-r}}\mathcal{E}(t) \right)\leq \mathcal{C}''_0 m(t)t^{2q+s-p-1}e^{\frac{M}{1-r}t^{1-r}}.
\end{equation*}
Then, using a similar argument as in Case I,  there exists  $\mathcal{C}'_3 \geq 0$ such that  for $t$ big enough,
\begin{equation*}
\mathcal{E}(t)\leq \mathcal{C}'_3 m(t)t^{2q+s-p-1+r}.
\end{equation*}
Then,  as $t\to +\infty$,  it holds
\begin{equation}\label{L2ax}
\mathcal{L}_t(x(t),\lambda_t)-\mathcal{L}_t(x_t,\lambda_t)= \mathcal{O}\left( t^{ -p-1+r}\right)
\end{equation}
and
\begin{equation}\label{r1a}
\|(x(t)-x_t,\lambda(t)-\lambda_t)\|^2= \mathcal{O}\left( m(t)t^{2q+s-p-1+r}\right).
\end{equation}
Together with $(\ref{r1x})$ and $(\ref{r1a})$, we have
\begin{equation}\label{r2q}
\|Ax(t)-b\|=\mathcal{O}\left( \sqrt{m(t)}t^{\frac{2q+s-p-1+r}{2}}+\frac{1}{t^p} \right),~\mbox{as}~t\to +\infty.
\end{equation}
Together with $(\ref{r2xz})$,  $(\ref{L2ax})$, and $(\ref{r2q})$, we obtain that
$$
\mathcal{L}(x(t),\lambda^*)-\mathcal{L}(x^*,\lambda^*)=\mathcal{O}\left( \sqrt{m(t)}t^{\frac{2q+s-p-1+r}{2}}+\frac{1}{t^p}\right),~\mbox{as}~t\to +\infty.
$$
Moreover,    as $t\to +\infty,$
we have
$$
|f(x(t))-f(x^*)|=\mathcal{O}\left( \sqrt{m(t)}t^{\frac{2q+s-p-1+r}{2}}+\frac{1}{t^p}\right).
$$

Finally,   from (\ref{r1}) and (\ref{r1a}), we have $\lim_{t\to +\infty} \|(x(t)-x_t,\lambda(t)-\lambda_t)\| =0$.
Then, by Lemma {\ref{xtyt}}, we have $\lim_{t\to +\infty} \|(x(t),\lambda(t))-( {x}^*, {\lambda}^*)\| =0$.  The proof is complete. \qed
\end{proof}
\begin{remark}
\begin{enumerate}
\item[{\rm (i)}]
In the special case that $m(t) \equiv 1$ and $\gamma\equiv1$, i.e.,  without  variable mass and Hessian-driven damping,  the dynamical system
\textup{(\ref{dyn1})}  reduces to the  system (\ref{zhu}) considered in \cite{Zhu2024S}. Thus, Theorem \ref{slow} extends the results obtained in
\cite{Zhu2024S}.
\item[{\rm (ii)}] Under the strong assumption   that $x(t)$ either  remains within the open ball $\mathbb{B}(0, \|x^*\|)$  or in its complement, strong convergence  results  in the inferior limit sense have also been obtained in  \cite{2026zhu,lihl,zhujcam,sunjota,Bot2021T,cs24k}. However, without relying on this assumption,  Theorem \ref{slow} provides a   strong convergence result  in the limit sense. Thus, Theorem \ref{slow} can also be regarded as a improvement of the results presented in \cite{2026zhu,lihl,zhujcam,sunjota,Bot2021T,cs24k}.
\end{enumerate}
\end{remark}

In the case that $ 0<p<\frac{1-q}{2}$, we can easily obtain  the following result   in terms of Theorem \ref{slow} (iii).
\begin{theorem}\label{slow100} Suppose that Assumptions \textup{\ref{assume1}}  and  \textup{\ref{assume2}} are satisfied. Let $ (x ,\lambda ):\left[t_0, +\infty\right)\to\mathcal{X} \times \mathcal{Y}  $ be a global solution of   system \textup{(\ref{dyn1})} and let $ (x_t,\lambda_t) $ be the saddle point of $ \mathcal{L}_t $. Suppose that  $ 0<p<\frac{1-q}{2}$, $5q+2s-1<0$, and $ 4q+s+p-2<0$. Then, for
$( {x}^*, {\lambda}^*)=\textup{Proj}_{\Omega}{0} $, we have
\begin{enumerate}
\item[{\rm (i)}] $(x(t),\lambda(t))$ converges strongly to $( {x}^*, {\lambda}^*)$, that is $
\lim_{t\to +\infty} \|(x(t),\lambda(t))-( {x}^*, {\lambda}^*)\| =0.
$

\item[{\rm (ii)}] If $ 0<p<2q+s$,   as $t\to +\infty$ it holds
\begin{eqnarray*}
\left\{\begin{split}
&\mathcal{L}(x(t),\lambda^*)-\mathcal{L}(x^*,\lambda^*)=\mathcal{O}\left(\sqrt{m(t)}t^{\frac{3q+s-p-1}{2}}+\frac{1}{t^p}\right), \\
&|f(x(t))-f(x^*)|=\mathcal{O}\left(\sqrt{m(t)}t^{\frac{3q+s-p-1}{2}}+\frac{1}{t^p}\right),\\
&\|Ax(t)-b\|=\mathcal{O}\left(\sqrt{m(t)}t^{\frac{3q+s-p-1}{2}}+\frac{1}{t^p}\right),\\
&\|(x(t)-x_t,\lambda(t)-\lambda_t)\|^2= \mathcal{O}\left(  {m(t)}t^{ {3q+s-p-1} }\right).
\end{split}\right.
\end{eqnarray*}

\item[{\rm (iii)}] If $ 2q+s\leq p<\frac{1-q}{2} $,  as $t\to +\infty$ it holds
\begin{eqnarray*}
\left\{\begin{split}
&\mathcal{L}(x(t),\lambda^*)-\mathcal{L}(x^*,\lambda^*)=\mathcal{O}\left(\sqrt{m(t)}t^{\frac{q-1}{2}}+\frac{1}{t^p}\right), \\
&|f(x(t))-f(x^*)|=\mathcal{O}\left(\sqrt{m(t)}t^{\frac{q-1}{2}}+\frac{1}{t^p}\right), \\
&\|Ax(t)-b\|=\mathcal{O}\left(\sqrt{m(t)}t^{\frac{q-1}{2}}+\frac{1}{t^p}\right), \\
&\|(x(t)-x_t,\lambda(t)-\lambda_t)\|^2= \mathcal{O}\left(  {m(t)}t^{q-1 }\right).
\end{split}\right.
\end{eqnarray*}
\end{enumerate}
\end{theorem}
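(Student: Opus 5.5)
We derive Theorem \ref{slow100} from Theorem \ref{slow} (iii). The only real work is to make the exponent $2q+s-p-1+r$ explicit, where $r=\max\{q,p-q-s\}$, and then check that the hypotheses of Theorem \ref{slow} hold.

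\textbf{Checking the hypotheses of Theorem \ref{slow}.} The assumptions of Theorem \ref{slow100} are $0<p<\frac{1-q}{2}$, $5q+2s-1<0$ and $4q+s+p-2<0$. The condition $4q+s+p-2<0$ is assumed directly. Since $\frac{1-q}{2}<1-q$, we also have $0<p<1-q$. It remains to show $3q+s-p-1<0$. This follows from $5q+2s-1<0$: we get $s<\frac{1-5q}{2}$, hence
$$3q+s-p-1<3q+\tfrac{1-5q}{2}-1=\tfrac{q-1}{2}<0.$$
So Theorem \ref{slow} applies. Its part (i) gives the strong convergence claim (i) immediately, and since $p<\frac{1-q}{2}$, its part (iii) gives the rates with exponent $2q+s-p-1+r$.

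\textbf{Splitting according to which term realises $r$.} We have $r=q$ exactly when $q\ge p-q-s$, i.e. $p\le 2q+s$. Otherwise $p>2q+s$ and $r=p-q-s$.

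In case (ii), $0<p<2q+s$, so $r=q$ and the exponent becomes $3q+s-p-1$. Substituting this into the four estimates of Theorem \ref{slow} (iii) gives exactly the stated bounds: the distance to the saddle point is $\mathcal O(m(t)t^{3q+s-p-1})$, and the gap, objective residual and feasibility violation are $\mathcal O(\sqrt{m(t)}t^{\frac{3q+s-p-1}{2}}+t^{-p})$.

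In case (iii), $2q+s\le p<\frac{1-q}{2}$, so $r=p-q-s$; at the boundary $p=2q+s$ both values coincide. The exponent becomes $2q+s-p-1+p-q-s=q-1$, which yields $\mathcal O(m(t)t^{q-1})$ and $\mathcal O(\sqrt{m(t)}t^{\frac{q-1}{2}}+t^{-p})$.

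\textbf{Non-emptiness of case (iii).} We should note that case (iii) can actually occur. This requires $2q+s<\frac{1-q}{2}$, i.e. $5q+2s-1<0$, which is precisely the hypothesis added in Theorem \ref{slow100}. That explains why this hypothesis appears.

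\textbf{Main obstacle.} There is no deep step here. The only point needing care is the bookkeeping in the reduction to Theorem \ref{slow}: deriving $3q+s-p-1<0$ from the new hypotheses, and tracking which branch of the maximum defining $r$ is active, including the boundary case $p=2q+s$.
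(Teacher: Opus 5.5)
Your proposal is correct and is essentially the paper's own argument: deduce $3q+s-p-1<0$ from $5q+2s-1<0$, note that $r=q$ for $p<2q+s$ and $r=p-q-s$ for $p\ge 2q+s$, and substitute the resulting exponents $3q+s-p-1$ and $q-1$ into Theorem~\ref{slow}~(iii). The paper leaves implicit your explicit check of $0<p<1-q$ and your observation that $5q+2s-1<0$ is exactly what makes case (iii) nonempty; both are harmless additions.
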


\begin{proof} From $5q+2s-1<0$, we have $3q+s-p-1<0.$ Further, note that  $r=q$ when $ 0<p<2q+s$ and $r=p-q-s$ when  $ 2q+s\leq p<\frac{1-q}{2} $. Then, by vietue of Theorem \ref{slow} (iii), we can easily get the desired results. \qed
\end{proof}

In the case that $ \frac{1-q}{2}\leq p<1-q$, we can easily obtain  the following result   in terms of Theorem \ref{slow} (ii).

\begin{theorem}Suppose that Assumptions \textup{\ref{assume1}}  and  \textup{\ref{assume2}} are satisfied. Let $ (x ,\lambda ):\left[t_0, +\infty\right)\to\mathcal{X} \times \mathcal{Y}  $ be a global solution of   system \textup{(\ref{dyn1})} and let $ (x_t,\lambda_t) $ be the saddle point of $ \mathcal{L}_t $. Suppose that  $ \frac{1-q}{2}\leq p<1-q$, $\frac{1-q}{2}\leq2q+s<1-q$  and  $ 4q+s+p-2<0$ Then, for
$( {x}^*, {\lambda}^*)=\textup{Proj}_{\Omega}{0} $, we have
\begin{enumerate}
\item[{\rm (i)}] $(x(t),\lambda(t))$ converges strongly to $( {x}^*, {\lambda}^*)$, that is $
\lim_{t\to +\infty} \|(x(t),\lambda(t))-( {x}^*, {\lambda}^*)\| =0.
$

\item[{\rm (ii)}] If $ \frac{1-q}{2}<p<2q+s$, as $t\to +\infty$ it holds
\begin{eqnarray*}
\left\{\begin{split}
&\mathcal{L}(x(t),\lambda^*)-\mathcal{L}(x^*,\lambda^*)=\mathcal{O}\left(\sqrt{m(t)}t^{\frac{4q+s+p-2}{2}}+\frac{1}{t^p}\right), \\
&|f(x(t))-f(x^*)|=\mathcal{O}\left(\sqrt{m(t)}t^{\frac{4q+s+p-2}{2}}+\frac{1}{t^p}\right), \\
&\|Ax(t)-b\|=\mathcal{O}\left(\sqrt{m(t)}t^{\frac{4q+s+p-2}{2}}+\frac{1}{t^p}\right), \\
&\|(x(t)-x_t,\lambda(t)-\lambda_t)\|^2= \mathcal{O}\left( {m(t)}t^{ {4q+s+p-2} }\right).
\end{split}\right.
\end{eqnarray*}

\item[{\rm (iii)}] If $ 2q+s\leq p<1-q$, as $t\to +\infty$ it holds
\begin{eqnarray*}
\left\{\begin{split}
&\mathcal{L}(x(t),\lambda^*)-\mathcal{L}(x^*,\lambda^*)=\mathcal{O}\left(\sqrt{m(t)}t^{q+p-1}+\frac{1}{t^p}\right), \\
&|f(x(t))-f(x^*)|=\mathcal{O}\left(\sqrt{m(t)}t^{q+p-1}+\frac{1}{t^p}\right), \\
&\|Ax(t)-b\|=\mathcal{O}\left(\sqrt{m(t)}t^{q+p-1}+\frac{1}{t^p}\right), \\
&\|(x(t)-x_t,\lambda(t)-\lambda_t)\|^2= \mathcal{O}\left(  {m(t)}t^{2q+2p-2}\right).
\end{split}\right.
\end{eqnarray*}
\end{enumerate}
\end{theorem}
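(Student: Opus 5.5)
This result follows from Theorem \ref{slow} (ii), in the same way that Theorem \ref{slow100} followed from Theorem \ref{slow} (iii). The plan is first to check that the hypotheses of Theorem \ref{slow} hold, and then to compute $r=\max\{q,p-q-s\}$ in each subcase and substitute it into the exponent $3q+s+p-2+r$.

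\textbf{Hypotheses of Theorem \ref{slow}.} Assumptions \ref{assume1} and \ref{assume2}, the bounds $0<p<1-q$ and $4q+s+p-2<0$ are all assumed directly.
\begin{itemize}
\item[(a)] We still need $3q+s-p-1<0$. Since $2q+s<1-q$, we have $3q+s<1$.
\item[(b)] Since $p\geq\frac{1-q}{2}>0$, it follows that $3q+s-p-1<3q+s-1<0$.
\end{itemize}
So Theorem \ref{slow} applies, and its part (i) gives the strong convergence claimed in (i).

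\textbf{Part (ii), where $\frac{1-q}{2}<p<2q+s$.} Here $p-q-s<q$, so $r=q$. The exponent $3q+s+p-2+r$ becomes $4q+s+p-2$. This exponent is negative by assumption, so the rates in Theorem \ref{slow} (ii) become exactly those stated, including $\|(x(t)-x_t,\lambda(t)-\lambda_t)\|^2=\mathcal{O}(m(t)t^{4q+s+p-2})$.

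\textbf{Part (iii), where $2q+s\leq p<1-q$.} Now $p-q-s\geq q$, so $r=p-q-s$. The exponent becomes $3q+s+p-2+p-q-s=2q+2p-2$. Halving it gives the rate $\sqrt{m(t)}\,t^{q+p-1}$ for the primal-dual gap, the objective residual and the feasibility violation, together with $\|(x(t)-x_t,\lambda(t)-\lambda_t)\|^2=\mathcal{O}(m(t)t^{2q+2p-2})$. This exponent is negative because $p<1-q$, so the error bound tends to zero.

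\textbf{Possible obstacle.} The only subtle point is to check that $r<1$, as required by the argument of Theorem \ref{slow}, and that the case split agrees with the condition $\frac{1-q}{2}\leq 2q+s$.
\begin{itemize}
\item In case (iii), $r=p-q-s<1-2q-s<1$.
\item The assumption $\frac{1-q}{2}\leq 2q+s$ is what makes the range of (ii) nonempty (or empty, at the boundary).
\end{itemize}
Neither check needs a new estimate; both are just comparisons of exponents.
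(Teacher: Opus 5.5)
Your proposal is correct and follows the paper's own route. The paper likewise gets (i) from Theorem \ref{slow}(i) and (ii)--(iii) from Theorem \ref{slow}(ii), using $r=q$ when $p<2q+s$ and $r=p-q-s$ when $p\geq 2q+s$, just as in the proof of Theorem \ref{slow100}; your explicit check that $3q+s-p-1<0$ follows from $2q+s<1-q$ is a hypothesis verification the paper leaves implicit.
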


\begin{proof}
The proofs of items (ii) and (iii) are completed  based on Theorem \ref{slow} (ii)  and     are similar to the proof of Theorem \ref{slow100}. \qed
\end{proof}

\section{Numerical experiments}
In this section, we give some numerical experiments to demonstrate the obtained theoretical results. All codes are performed  on a PC (with 2.30GHz Intel Core i5-8300H and 8GB memory) under MATLAB Version R2018a.

\begin{example}\label{example5.1}
 Let $ Q = H^\top H \in \mathbb{R}^{n \times n} $, $  H \in \mathbb{R}^{n \times n}  $,  $ A \in \mathbb{R}^{m \times n} $, $k\in\mathbb{R}^{ n}$ and $b\in\mathbb{R}^{ m}$. All entries of $ H, A, k $ and $ b $ are generated by the standard Gaussian distribution. Consider the following quadratic optimization problem
\begin{eqnarray*}
\left\{ \begin{array}{ll}
&\mathop{\mbox{min}}\limits_{x\in\mathbb{R}^n}~~{\frac{1}{2}x^\top Q x}+k^\top x\\
&\mbox{s.t.}~~Ax=b.
\end{array}
\right.
\end{eqnarray*}
\end{example}

In this numerical experiment, we set  $m=5$, $n=10$. For the dynamical system (\ref{dyn1}), we consider the initial conditions $ (x(1), \lambda(1), \dot{x}(1)) = \boldsymbol{1}^{(2n+ m)\times 1}$, and take $\alpha=1.1$, $q=0.06$, $p=0.9$, $s=0.7$, $c=0.01$ and $\gamma=2$. In this setting on the parameters, we test numerical performance
of   $|f(x(t))-f(x^*)|$ and $\|Ax(t)-b\|$ under different choices of the mass function $m(t)\in\{1,\frac{1}{t^{0.1}},\frac{1}{t^{0.4}},\frac{1}{t^{0.7}}\}$.   The results are depicted in Figure \ref{f1}.
 \vspace{-2em}
 \begin{figure}[H]%
     \centering
    {
         \includegraphics[width=0.45\linewidth]{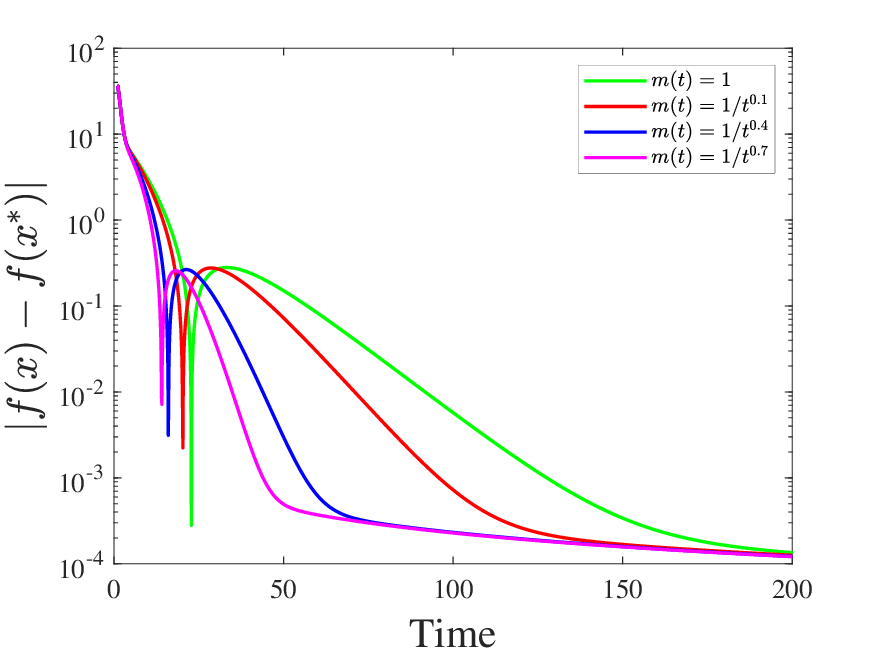}
         }\hfill
     {
         \includegraphics[width=0.45\linewidth]{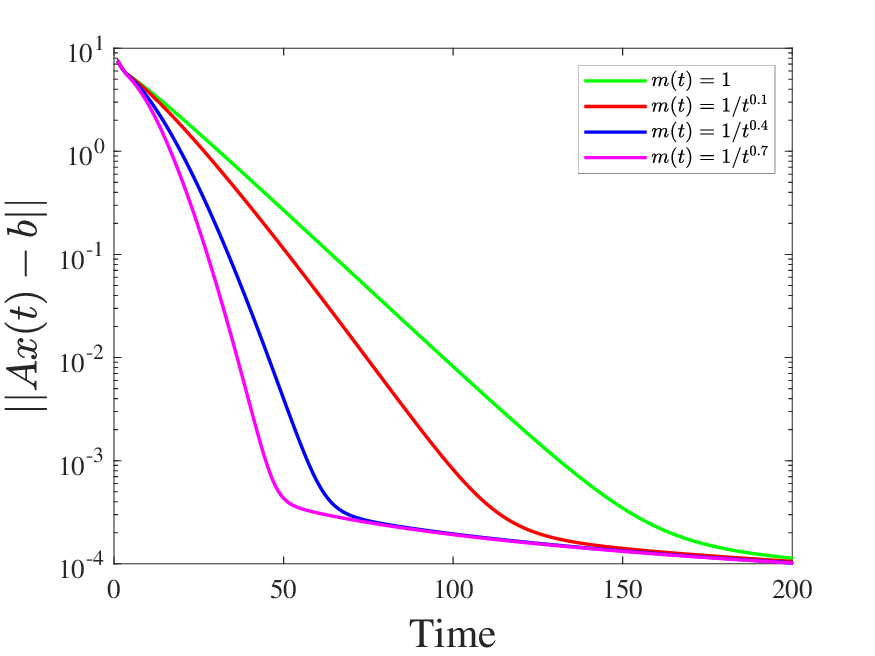}
         }

     \caption{Error analysis of   system (\ref{dyn1}) under different mass functions.}
     \label{f1}
 \end{figure}\vspace{-2em}

Figure \ref{f1}  shows that as the mass function $ m(t) $ takes smaller values, the dynamical system (\ref{dyn1}) performs better for the convergence of  the objective residual $|f(x(t))-f(x^*)|$ and constraint violation $\|Ax(t)-b\|$.

\begin{example}\label{example5.2} Let  $x=(x_1,x_2,x_3)^\top\in \mathbb{R}^3$, $f(x)=(mx_1+nx_2+ex_3)^2$, $A=(m,-n,e)$, $m,n,e\in\mathbb{R}\backslash \{0\}$ and $b=0$. Consider the following   convex optimization problem
\begin{eqnarray}\label{N1}
\left\{ \begin{array}{ll}
&\mathop{\mbox{min}}\limits_{x\in\mathbb{R}^3}~~{(mx_1+nx_2+ex_3)^2}\\
&\mbox{s.t.}~~mx_1-nx_2+ex_3=0.
\end{array}
\right.
\end{eqnarray}
\end{example}
Clearly, the solution set of this convex optimization problem (\ref{N1}) is $\{(x_1,0,-\frac{m}{e}x_1)^{\top}|x_1\in\mathbb{R}\}$, $x^*=(0,0,0)^\top$ is the minimal norm solution of problem (\ref{N1}),  and $f(x^*)=0$.

In the following numerical experiment, we investigate the influence of time scaling function $ t^s $. For problem (\ref{N1}), we consider $ m=1, n=2, e=1 $. Take the initial
condition  $x(1)=(1,1,-1)^\top$, $\lambda(1)=(1)$, $\dot{x}(1)=(-1,-1,1)^\top$. Set
  $\alpha = 3$, $q=0.1$, $p=0.1$, $c=5$, $\gamma=1$,  $m(t)=\frac{1}{t^{0.15}}$ and $s\in\{0.1, 0.3, 0.5, 0.7\}$. The behaviors of  $|f(x(t))-f(x^*)|$ and $\|Ax(t)-b\|$ of the dynamical
system (\ref{dyn1}) are depicted in Figure \ref{f2}.

\vspace{-2em}
 \begin{figure}[H]%
     \centering
     {
     \includegraphics[width=0.45\linewidth]{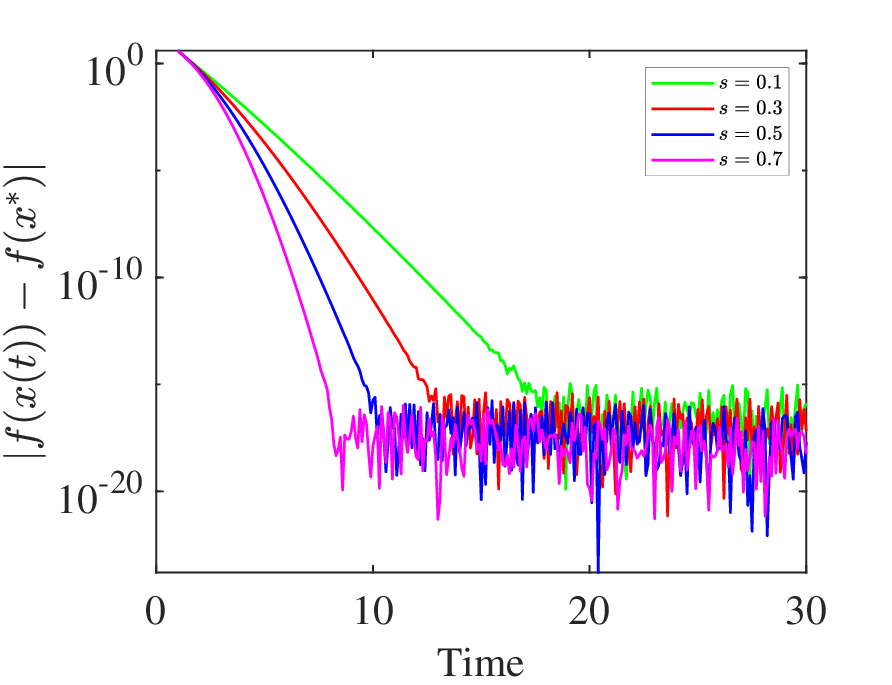}
         }\hfill
     {
         \includegraphics[width=0.45\linewidth]{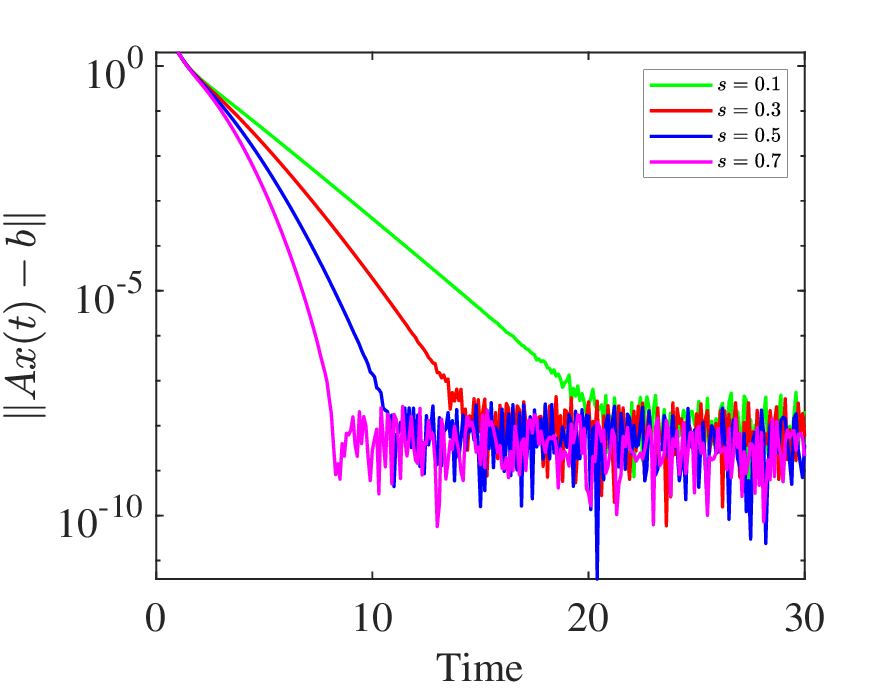}
         }

     \caption{Error analysis of   system (\ref{dyn1}) under different time scaling functions.}
     \label{f2}
 \end{figure}\vspace{-2em}

 As shown  in Figure  \ref{f2},  a faster-growing time scaling function helps to accelerate the  convergence  for both the objective residual $|f(x(t))-f(x^*)|$ and constraint violation $\|Ax(t)-b\|$.

Next, we  investigate the effect of Hessian-driven damping on the system.  For problem (\ref{N1}), we let $ m=10, n=20 , e=10 $. Take the same  initial
condition as above. Set
  $\alpha = 3$, $q=0.1$, $p=0.1$, $c=5$, $s=0.1$,  $m(t)=\frac{1}{t^{0.15}}$ and $\gamma\in\{0, 1\}$.  The behaviors of trajectory $x(t)$ generated by the dynamical
  system (\ref{dyn1}) are depicted in Figure \ref{f3}.

 \begin{figure}[htbp]
\begin{minipage}[t]{0.4\textwidth}
\centering
\includegraphics[scale=0.4]{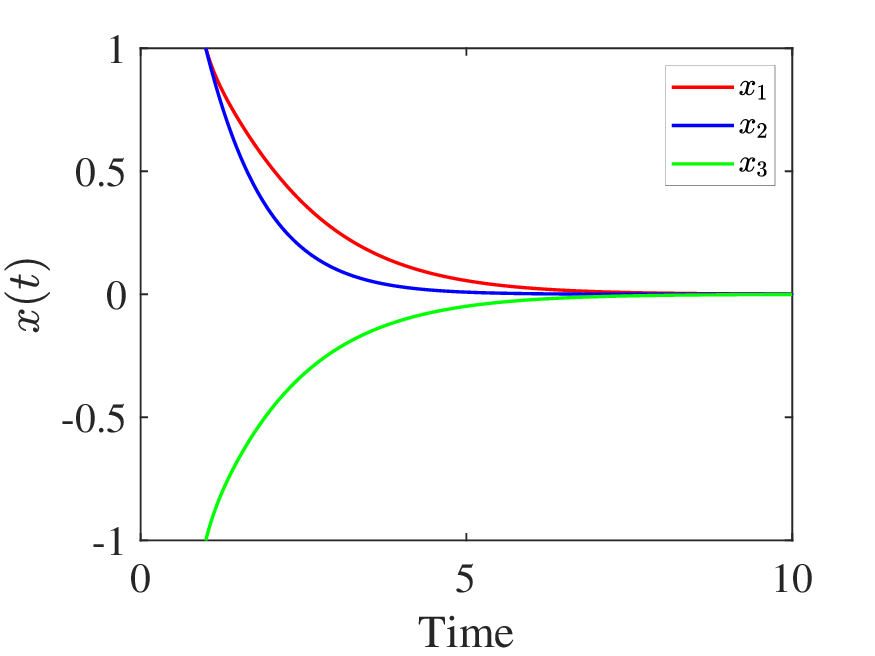}
\vspace{-3em}
\hspace{9pt}
\makebox[0.6\linewidth]{\fontsize{8.5}{8.5}\selectfont $({a})\gamma=1$}
\end{minipage}
\hfill
\begin{minipage}[t]{0.4\textwidth}
\centering
\includegraphics[scale=0.4]{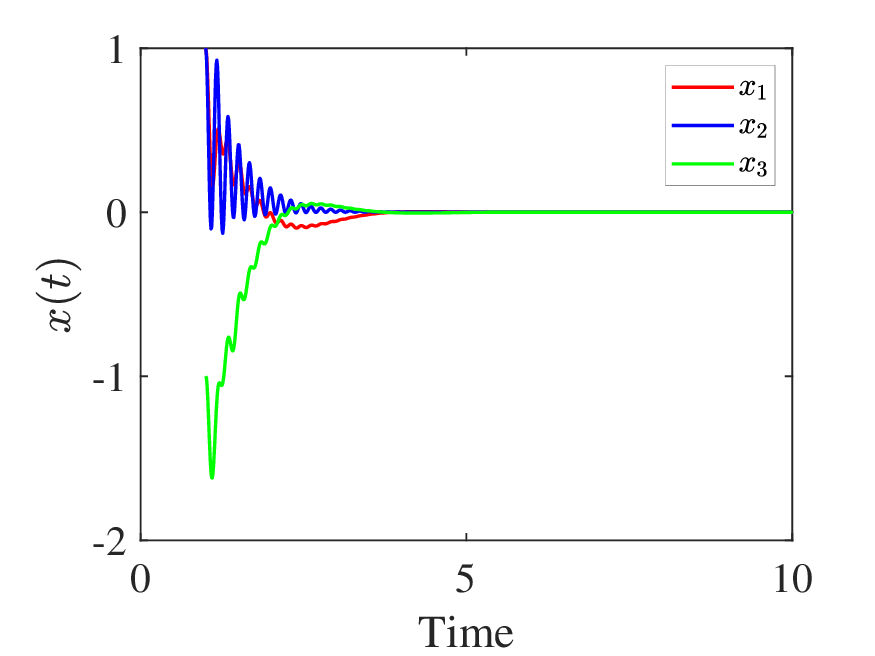}
\vspace{-3em}
\hspace{9pt}
\makebox[0.6\linewidth]{\fontsize{8.5}{8.5}\selectfont $({b})\gamma=0$}
\end{minipage}
\vspace{1em}
\caption{The behaviors of the trajectory  generated by  system (\ref{dyn1}).}
\vspace{-2em}
\label{f3}
\end{figure}
From Figure \ref{f3}, it can be observed that the Hessian-driven damping term enables the solution trajectory of dynamical system (\ref{dyn1}) to converge more smoothly to the optimal solution.

\section{Conclusion}
 In this paper, we introduce a Tikhonov regularized  primal-dual dynamical system (\ref{dyn1})  with variable mass and Hessian-driven damping to solve the convex optimization problem $(\ref{constrained})$. Compared with the dynamical systems introduced in \cite{Zhu2024S,He2023C} for solving the problem $(\ref{constrained})$, the system (\ref{dyn1}) not only incorporates slowly  viscous damping, extrapolation and time scaling, but is also governed by a constant Hessian-driven damping and variable mass. By employing  appropriate conditions on the underlying parameters, we establish the fast convergence rates of  the primal-dual gap, the objective residual, and the feasibility violation,   as well as  the strong asymptotic convergence of
the trajectory generated by the system (\ref{dyn1}).

Although some new results have been obtained on the   system (\ref{dyn1}) for solving the problem $(\ref{constrained})$, there are remaining questions to be
addressed in the future. For instance, an interesting   direction for research is  to investigate the explicit discretization  of the  system (\ref{dyn1}), which leads to  an inertial numerical algorithm   for solving the problem $(\ref{constrained})$. On the other hand, it is also important to
consider the   system (\ref{dyn1}) with  time-dependent Hessian-driven damping
in the future.

\section*{Funding}
\small{  This research is supported by the Natural Science Foundation of Chongqing (CSTB2024NSCQ-MSX0651) and the Team Building Project for Graduate Tutors in Chongqing (yds223010).}

\section*{Data availability}

 \small{ The authors confirm that all data generated or analysed during this study are included in this article.}

 \section*{Declaration}

 \small{\textbf{Conflict of interest} No potential conflict of interest was reported by the authors.}

\bibliographystyle{plain}

\end{document}